\numberwithin{equation}{section}
\def \sgn {\operatorname{sgn}}
\def \res {\operatorname*{res}}
\def \d {\mathrm{d}}
\def \E {\mathrm{E}}
\def \C {\mathbb{C}}
\def \R {\mathbb{R}}
\def \Re {\text{Re}}
\theoremstyle{plain}
\newtheorem{proposition}{Proposition}[section]
\newtheorem{lemma}{Lemma}[section]
\theoremstyle{definition}
\theoremstyle{remark}
\newtheorem{remark}{Remark}[section]
\title{On Distribution of Product of Stable Laws}
\author{Andrea Karlov\' a \footnote{Email: andrea.karlova@gmail.com}}
\date{September 13, 2013}
\begin{document}
\maketitle
\begin{abstract}
We derive the probability distribution of product 
of two independent random variables,
each distributed according the one-dimensional stable law. 
We represent the density by its power series 
and its asymptotic expansions.
As Fox's H-functions with a particular choice of 
parameters well describe the densities of stable laws,
we discuss the choice of parameters for Fox's 
H-function such that it matches the derived densities.
As a consequence, we give representations 
of these particular Fox's H-functions
in terms of its power series.

\textbf{Keywords:} one-dimensional stable distributions, Mellin transform,
Fox's H-function, distribution of a product of independent random variables.
\end{abstract}

\section{Introduction}
The integral transforms are useful tool 
for studying the distributions obtained by applying 
the binary operations on the set of independent random variables.
The applications of the integral transforms have been summarized under
certain general considerations, e.g., in \cite{zolotarev_mellin}.
In particular, Fourier transform maps the convolution operation
onto multiplication operation, and thus it is greatly used for 
studying problems of additive character.
The Mellin transform maps the multiplicative convolution onto 
the multiplication operation, see e.g. \cite{misra}, which provides  
a convenient tool for studying products of independent random 
variables. Of course, the product can be studied 
by taking the sum of logarithms of independent random variables,
thus the problem can be transformed to already well-explored additive scheme. 
The computation of the probability distribution of 
the logarithm of random variable can bring other technical difficulties.
The Mellin transform, however, provides a direct approach to study the problem
of multiplication due to its algebraic properties. 
The connection between Mellin transform 
and products of two independent 
random variables was discussed in \cite{epstein}.
The method from \cite{epstein} was generalized in \cite{thompson}
to a computation of the product of $n$ independent random variables 
and the tables for particular products of 
Cauchy and Gaussian random variables were provided.
Zolotarev \cite{zolotarev} discussed the construction of general 
multiplication scheme of independent random variables,
(referred to as M-scheme),
and the limit theorems under M-scheme were already 
explored by Abramov \cite{abramov}.

In connection with stable laws the Mellin transform and the representation 
of stable densities in terms of Fox's H-function was considered in 
\cite{schneider}. In \cite{zolotarev_mellin} the Mellin transform 
of stable laws was established in order to study 
multiplicative and divisibility properties of stable laws.
In \cite{zolotarev}, Zolotarev gives examples where the multiplication
and division theorems are applied in order to simplify 
the computation of probability distributions of statistics 
formed by the stable random variables. 
It seems that the explicit representation for distribution of 
the product of two independent stable random variables
has not been derived. 
Therefore, it is the purpose of this paper to give 
the explicit representation for the product of 
two independent stable random variables and
discuss their properties.

In this paper we firstly give an short overview on 
the Mellin transform and its application into computation of 
the product of two independent random variables. Further
we recall the elementary relations for stable random variables
and their connection to the concept of Fox's H-functions.
In the section three and four we use the inverse Mellin transform 
and Residue Theorem to establish the probability density function 
of the product of two independent stable random variables and discuss its
properties. The densities are given in the form of power series,
the connections with Fox's H-functions and concluding remarks 
are discussed in the last section.

\section{Preliminaries.}
\subsection{Product Density and Mellin Transform.}
Consider two independent random variables $X_1,X_2$,
with continuous probability density functions $f_1(x)$ and $f_2(x)$,
respectively. The probability density function $g(x)$ of their product 
$X=X_1\cdot X_2$ is given by:
\begin{align}
\label{eqn:prod}
g(x)=\int_{-\infty}^{\infty}\frac{1}{|y|}f_1(x/y)f_2(y)\mathrm{d}y
=\int_{-\infty}^{\infty}\frac{1}{|y|}f_2(x/y)f_1(y)\mathrm{d} y,
\end{align}
see e.g. \cite{epstein} for discussion.
The Mellin transform is defined for a positive random variable $X$ with 
continuous density $f(x)$ as:
\begin{align}
F(s)=\E\left[X^{s-1}\right]=\int_0^{\infty}x^{s-1}f(x)\mathrm{d} x,
\end{align}
where $s$ is a complex number, such that for some vertical strip $a_1<\Re s<a_2$,
the $F(s)$ is analytic in this strip. Therefore, the Mellin transform is defined
by a pair: a function $F(s)$ and its strip of analyticity. 
The width of the strip is determined by the behaviour of $f(x)$
for $x$ approaching the origin and for $x$ going to infinity.

The Mellin transform of product $Y=X_1\cdot X_2$ of two positive independent 
random variables $X_1,X_2$, with continuous densities $f_1(x),f_2(x)$, is given by:
\begin{align}
\label{eqn:prod1}
\nonumber
G(s)&=\E\left[Y^{s-1}\right]=\E\left[X_1^{s-1}X_2^{s-1}\right]=
\E\left[X_1^{s-1}\right]\E\left[X_2^{s-1}\right]=\\
&=F_1(s)F_2(s).
\end{align}

The inverse Mellin transform is defined as:
\begin{align}
f(x)=\frac{1}{2\pi i}\int_{a-i\infty}^{a+i\infty}x^{-s}F(s)\mathrm{d}s,
\end{align}
for all positive $x$ for which the function $f(x)$ is continuous,
where the integration path lies within the strip of analyticity of $F(s)$,
see e.g. \cite{misra} for more rigorous introduction.

Taking the inverse Mellin transform of (\ref{eqn:prod1}) yields:
\begin{align*}
g(x)=\frac{1}{2\pi i}\int_{a-i\infty}^{a+i\infty}x^{-s}F_1(s)F_2(s)\mathrm{d}s,
\end{align*}
and it can be shown that probability density computed this way is 
exactly the density in (\ref{eqn:prod}), see \cite{epstein}.

To extend this approach to a random variable $X$ which can be positive
and negative, Epstein \cite{epstein} decomposed the random variable $X$ 
with continuous density as $X=X^+-X^-$, where $X^+,X^-$, 
designates the positive and negative part
of random variable $X$, respectively.
The density is then decomposed into two continuous functions $f_1(x),f_2(x)$,
such that $f(x)=f_1(x)+f_2(x)$, where $f_1(x)=f(x)$ for $x>0$ and is $0$ 
otherwise and $f_2(x)=f(x)$ for $x<0$ and is $0$ when $x$ takes positive values.
The Mellin transform is then:
\begin{align*}
F_1(s)&=\E\left[(X^+)^{s-1}\right]=\int_0^{\infty}x^{s-1}f_1(x)\mathrm{d}x,\\
F_2(s)&=\E\left[(X^-)^{s-1}\right]=\int_0^{\infty}x^{s-1}f_2(-x)\mathrm{d}x,
\end{align*}
where $F_1(s)$ is the Mellin transform of $X$ when $x>0$ and $F_2(s)$ 
is the Mellin transform of $X$ when $x<0$.
Alternatively, Zolotarev \cite{zolotarev_mellin} treated 
this problem by introducing the truncations of random variable.
For the purpose of this article, we use the method of Epstain. 
So, applying the above described procedure on the product of two real independent 
random variables $X_1,X_2$ with continuous densities $f_1(x),f_2(x)$, 
respectively, Epstein \cite{epstein} 
decomposed the densities $f_1(x),f_2(x)$ into:
\begin{align*}
f_1(x)&=f_{11}(x)+f_{12}(x),\\
f_2(x)&=f_{21}(x)+f_{22}(x),
\end{align*}
where $f_{11}(x)=f_{21}(x)=0$ for $x$ negative and $f_{11}(x)=f_1(x)$,
$f_{21}(x)=f_2(x)$ for $x>0$, respectively. Similarly,
for $x>0$ we set $f_{12}(x)=f_{22}(x)=0$ and $f_{12}(x)=f_1(x)$,
$f_{22}(x)=f_2(x)$ for $x<0$, respectively. 
Then substituting into (\ref{eqn:prod}), we have the probability density function
$g(x)$ of the product $X=X_1\cdot X_2$ given by $g(x)=g_1(x)+g_2(x)$, where:
\begin{align*}
g_1(x)&=\int_0^{\infty}\frac{1}{y}f_{11}(x/y)f_{21}(y)\mathrm{d}y+
\int_0^{\infty}\frac{1}{y}f_{12}(-x/y)f_{22}(-y)\mathrm{d}y,\\
g_2(x)&=\int_0^{\infty}\frac{1}{y}f_{11}(x/y)f_{22}(-y)\mathrm{d}y+
\int_0^{\infty}\frac{1}{y}f_{12}(-x/y)f_{21}(y)\mathrm{d}y,
\end{align*}
and $g_1(x)>0$ for all $x>0$ and is $0$ otherwise, $g_2(x)>0$
for all negative $x$ and vanishes identically otherwise.
The Mellin transform of the product density $g(x)$ yields:
\begin{subequations}
\begin{align}
\label{eqn:prod2}
G_1(s)&=F_{11}(s)F_{21}(s)+F_{12}(s)F_{22}(s),\\
G_2(s)&=F_{11}(s)F_{22}(s)+F_{12}(s)F_{21}(s),
\end{align}
\end{subequations}
where $F_{ij}(s),\ i,j=1,2$ denotes the Mellin transform of relevant 
$f_{ij}(x),\ i,j=1,2$.
Taking the Mellin inverse of $G(s)$ gives the density of the product 
of two independent real random variables $X_1,X_2$. 
See e.g. \cite{epstein,thompson}.

\subsection{Stable Laws.}
The stable laws plays a central role in the problems considering convergence 
of the sums of independent random variables. Let us briefly recall its definition. 
Consider mutually independent random variables $X,X_1,X_2,\ldots$,
with probability distribution function $F(x)$ and denote the sum of $n$ elements 
as $S_n=X_1+\ldots+X_n$. The distribution $F(x)$ is stable if for each 
$n$ there exists constants $a_n>0$ and $b_n\in\R$, such that 
$S_n\overset{d}{=}a_nX+b_n$; i.e. the probability distribution function 
of the sum $S_n$ differs only by scale and, or, respectivelly, location 
from the distribution function $F(x)$; see e.g. \cite{feller2}.  
If $b_n=0$ for all $n$, the stable distribution is called 
strictly stable. In this paper, we confine our attention only to
strictly stable distributions.

In general, the densities of strictly stable distributions belong to the 
class of special functions, see \cite{zolotarev_specf}. 
The characteristic function $\varphi(k)$ of a strictly stable law is given by:
\begin{align}
\label{eqn:c}
\log \varphi(k)=\left\{
\begin{array}{c}
C k^{\alpha},\quad\text{for }k>0,\\
C^{\ast} |k|^{\alpha},\quad\text{for }k<0,
\end{array} \right .
\end{align}
where $C\in\C$, and where $C^{\ast}$ designates the complex conjugate 
of complex number $C$. We require $\Re~C<0$ in order to keep $\varphi(k)$ 
bounded. Parameter $\alpha$ is called the stability parameter of the stable law
and takes values: $0<\alpha\leq2$. The Gaussian probability law is a special
case of stable law for $\alpha=2$. The Cauchy distribution corresponds
to strictly stable law with $\alpha=1$. 
Depending on the form of complex number $C$ in (\ref{eqn:c}), many different 
parametrizations have been proposed, see e.g. \cite{zolotarev}. In this paper 
we use the parametrization discussed in \cite{karlova}.
We consider the geometric form of complex number $C$ and we define:
\begin{align}
\log \varphi(k;\alpha,p_1,c)=
-c|k|^{\alpha}\exp\left\{-i\alpha(p_1-p_2)\tfrac{\pi}{2} \sgn k\right\},
\end{align}
where $c>0$ and $p_1+p_2=1$. The condition $\Re~C<0$ implies the boundaries:
$0\leq p_1,p_2\leq 1$ for $0<\alpha<1$ and 
$1-\tfrac{1}{\alpha}\leq p_1,p_2\leq \tfrac{1}{\alpha}$ for $1<\alpha\leq 2$.
The parameter $c$ is a scaling parameter of stable law. The parameter $p_1$ is 
equal to the integral of the stable density over the positive real line, i.e.
$1-p_1$ is the value of the probability distribution function at $x=0$.
Therefore the parameter $p_1$ represents the asymmetry of stable law. 
Obviously, when $p_1=0.5$, the stable law is symmetric. For any admissible 
$p_1>0.5$, the stable law is skewed to the right, whereas for any admissible
$p_1<0.5$, the stable law is skewed to the left. The bounds on the parameters 
$p_1,p_2$ indicate the admissible proportion of probability mass on the positive
and negative parts of the real line; i.e. how maximally asymmetric the distribution
can be for different values of $\alpha$. 

The conditions on the parameters $p_1,p_2$ guarantee the integrability 
of $|\varphi(k;\alpha,p_1,c)|$ over the whole real line, and so it is valid 
to represent the stable density by the inverse Fourier integral as:
\begin{align}
\label{eqn:std}
f(x;\alpha,p_1,c)=
\frac{1}{\pi}\Re\int_0^{\infty}e^{-ikx}\varphi(k;\alpha,p_1,c)\mathrm{d}k.
\end{align}
It can be easily seen from (\ref{eqn:std}) that for any real $x$ the following two
properties hold:
\begin{subequations}
\begin{align}
\label{eqn:std1}
f(x;\alpha,p_1,c)&=f(-x;\alpha,p_2,c),\\
\label{eqn:std2}
f(x;\alpha,p_1,c)&=\tfrac{1}{c^{1/\alpha}}f(xc^{-1/\alpha};\alpha,p_1,1).
\end{align}
\end{subequations}
We refer to the first property as the reflection property 
of stable density, and to the second property as 
the scaling property of stable density. 
Note, that if $c=1$, we omit it in the notation.

Power series of stable densities were firstly completely determined by 
\cite{bergstrom}. In our notation, they are given by the series:
\begin{subequations}
\begin{align}
\label{eqn:std3}
f(x;\alpha,p_1)=\frac{1}{\pi}\sum_{k=1}^{\infty}
\frac{\Gamma(k/\alpha+1)}{\Gamma(k+1)}\sin (p_2 k\pi)x^k.
\end{align}
These series are absolutely and uniformly convergent in every finite domain 
on the real line for $1<\alpha\leq2$. For $\alpha=2$, the sum of the power series
equals the density of the normal distribution with variance $2$.
Although the series converges rapidly for any $|x|\leq 1$, 
it converges rather slowly for large values of $x$, 
as it takes many terms before the gamma functions can dominate the power $x^k$.
For $\alpha=1$ and $p_1=0.5$ the series reduces to geometric series and converges
only inside the unit circle. For $\alpha<1$, the series is divergent, however,
it is its asymptotic expansion for $|x|\ll 1$; see \cite{bergstrom}.

The asymptotic expansions for the stable densities for $1<\alpha<2$  
were also derived in \cite{bergstrom}, and are of the form:
\begin{align}
\label{eqn:std4}
f(x;\alpha,p_1)\sim\frac{1}{\pi}\sum_{k=1}^{\infty}
\frac{\Gamma(\alpha k+1)}{\Gamma(k+1)}\sin (\alpha p_2 k\pi)x^{-\alpha k-1},
\end{align}
\end{subequations}
for $x\gg 1$.
The series is divergent for $1<\alpha<2$. For $0<\alpha<1$ the series converges
and is a power series of density of that stable law; see \cite{bergstrom,zolotarev}.
For $\alpha=1$, the series converges outside the unit circle.

The Mellin transform of the density of stable law $f(x;\alpha,p_1)$ 
was given in \cite{zolotarev_mellin,schneider}. In our notation, it is:
\begin{subequations}
\begin{align}
\label{eqn:mellin}
F(s;\alpha,p_1)&=
\frac{1}{\alpha\pi}\Gamma(s)\Gamma\left(\tfrac{1-s}{\alpha}\right)
\sin(p_1[1-s]\pi)=\\
\nonumber
&=\frac{\Gamma(s)\Gamma\left(\tfrac{1-s}{\alpha}\right)}
{\alpha\Gamma(p_1-p_1s)\Gamma(p_2+p_1s)},
\end{align}
for $x>0$, and the strip of analyticity is: $0<\Re~s<\alpha+1$. 
The reflection property (\ref{eqn:std1}) implies
that for $x<0$ Mellin transform of $f(x;\alpha,p_1)$ equals 
$F(s;\alpha,p_2)$. 
The scaling property of stable law (\ref{eqn:std2}) implies that for
the Mellin transform of density $f(x;\alpha,p_1,c)$ we have:
\begin{align}
\label{eqn:mellin_scale}
F(s;\alpha,p_1,c)=c^{(s-1)/\alpha}F(s;\alpha,p_1,1),
\end{align}
\end{subequations}
see e.g. \cite{zolotarev_mellin,misra}. The probability density of stable law 
is represented by Mellin inverse in the form of Mellin-Barnes integral,
see e.g. \cite{bateman1}, as:
\begin{align}
\label{eqn:mellin1}
f(x;\alpha,p_1)=\frac{1}{2\alpha\pi i}\int_{a-i\infty}^{a+i\infty}
\frac{\Gamma(s)\Gamma\left(1-\tfrac{\alpha-1}{\alpha}-\tfrac{s}{\alpha}\right)}
{\Gamma(1-p_2-p_1 s)\Gamma(p_2+p_1s)}x^{-s}\mathrm{d}s,
\end{align}
for $x>0$. 
For $x<0$ the probability density of stable law is computed by using 
(\ref{eqn:std1}). 
Relation (\ref{eqn:mellin1}) indicates 
that the density of stable law 
can be written in terms of Fox's H-function
\footnote{Fox's H-function was introduced in \cite{fox}. It is defined 
by the Mellin-Barnes integral as:
\begin{align*}
H^{mn}_{pq}
\left[z\Big|
\begin{array}{ccc}
(a_1,\alpha_1),\ldots,(a_p,\alpha_p)\\
(b_1,\beta_1),\ldots,(b_q,\beta_q)
\end{array}
\right]
=\frac{1}{2\pi i}\int_{\gamma}g(s)z^{-s}\mathrm{d}s,
\end{align*}
where:
\begin{align*}
g(s)=\frac{\prod_{j=1}^m\Gamma(b_j+\beta_j s)\prod_{j=1}^n\Gamma(1-a_j-\alpha_j s)}
{\prod_{j=m+1}^{q}\Gamma(1-b_j-\beta_js)\prod_{j=n+1}^p\Gamma(a_j+\alpha_j s)},
\end{align*}
and where the integration path $\gamma$ is a vertical line in complex plane
indented to avoid the poles of the integrand $g(s)$.} 
as:
\begin{align}
f(x;\alpha,p_1)=\tfrac{1}{\alpha}H^{11}_{22}\left[x\Big|
\begin{array}{cc}
(\tfrac{\alpha-1}{\alpha},\tfrac{1}{\alpha}),(p_2,p_1)\\
(0,1),(p_2,p_1)
\end{array}\right],
\end{align}
for $x>0$, see \cite{schneider}.

The density of stable law given by Mellin-Barnes integral in (\ref{eqn:mellin1})
can be evaluated using the Residue Theorem. 
By carrying that computation it can be seen that the terms in 
the power series (\ref{eqn:std3}) 
corresponds to residues of Mellin transform of density of stable law, and 
these residues lie on the left-hand side of integration path in 
(\ref{eqn:mellin1}). Similarly, the terms in the series (\ref{eqn:std4}) 
are residues of Mellin transform of density of stable law, 
and these residues lie on the right-hand side of integration path 
in (\ref{eqn:mellin1}), see \cite{schneider}.

\section{Product of two independent stable random variables.}
In this section we establish the density of product 
of two independent random variables $X_1,X_2$, with stable probability densities 
$f_1(x;\alpha_1,p_1)$ and $f_2(x;\alpha_2,q_1)$, respectively, in terms of their
power series. The Mellin transforms $F_1(s;\alpha_1,p_1)$, $F_2(s;\alpha_2,q_2)$ 
of the stable densities $f_1(x;\alpha_1,p_1)$ and $f_2(x;\alpha_2,q_1)$, respectively,
are given in (\ref{eqn:mellin}) for $x>0$. 
From Mellin transform of product density $g(x)$ in (\ref{eqn:prod2}) and reflection
property of stable density (\ref{eqn:std1}) we have:
\begin{subequations}
\begin{align}
\label{eqn:G1}
G_1(s;\alpha_1,\alpha_2,p_1,q_1)=F_1(s;\alpha_1,p_1)F_2(s;\alpha_2,q_1)+
F_1(s;\alpha_1,p_2)F_2(s;\alpha_2,q_2)
\end{align} 
for $x>0$, and 
\begin{align}
\label{eqn:G2}
G_2(s;\alpha_1,\alpha_2,p_1,q_1)=F_1(s;\alpha_1,p_1)F_2(s;\alpha_2,q_2)+
F_1(s;\alpha_1,p_2)F_2(s;\alpha_2,q_1)
\end{align}
for $x<0$. The product density $g(x)$ is given by:
\begin{align}
g(x;\alpha_1,\alpha_2,p_1,q_1)&=
\frac{1}{2\pi i}\int_{a-i\infty}^{a+i\infty}x^{-s}G_1(s)\mathrm{d}s,\\
g(-x;\alpha_1,\alpha_2,p_1,q_1)&=
\frac{1}{2\pi i}\int_{a-i\infty}^{a+i\infty}x^{-s}G_2(s)\mathrm{d}s,
\end{align}
where $x>0$, and for simplicity, $0<a<1$. 

In the evaluation of the latter Mellin-Barnes integrals, clearly
\begin{align}
\label{eqn:st_mb}
\tilde{g}(x;\alpha_1,\alpha_2,p_1,q_1)=\frac{1}{2\pi i}\int_{a-i\infty}^{a+i\infty}
F_1(s;\alpha_1,p_1)F_2(s;\alpha_2,q_1)x^{-s}\mathrm{d}s,
\end{align}
\end{subequations}
is of particular interest. 
The power series of $\tilde{g}(x)$ 
is established in Lemmas \ref{thm:lem1}, \ref{thm:lem2}. 

The scaling property (\ref{eqn:std2}) and (\ref{eqn:mellin_scale})
implies the scaling property of product density:
\begin{subequations}
\begin{align}
\label{eqn:stprod_scale}
g(x;\alpha_1,\alpha_2,p_1,q_1,c_1,c_2)&=\tfrac{1}{
c_1^{1/\alpha_1}c_2^{1/\alpha_2}}
g\left(xc_1^{-1/\alpha_1}c_2^{-1/\alpha_2};\alpha_1,\alpha_2,p_2,q_2\right).
\end{align}

For the simplest case when random variables $X_1,X_2$ 
are identically distributed independent random variables, having symmetric stable 
density, i.e. $f_1(x)=f_2(x)\equiv f(x;\alpha,1/2)$, we have: 
$G_1(s)=2\left[F_1(s;\alpha,1/2)\right]^2$, and $G_2(s)=G_1(s)$. 
Then the product density $g(x)$ simply equals $2\tilde{g}(x)$ for real $x$.

In the general case, close examination of (\ref{eqn:G1}) and 
(\ref{eqn:G2}) implies that:
\begin{align}
\label{eqn:stprod_refl1}
g(x;\alpha_1,\alpha_2,p_1,q_1)&=g(x;\alpha_1,\alpha_2,p_2,q_2),\\
\label{eqn:stprod_refl2}
g(x;\alpha_1,\alpha_2,p_1,q_2)&=g(-x;\alpha_1,\alpha_2,p_1,q_1).
\end{align}
\end{subequations}
This implies the reflection property of product density:
\begin{align}
g(x;\alpha_1,\alpha_2,p_1,q_1)=g(-x;\alpha_1,\alpha_2,p_1,q_2)=
g(-x;\alpha_1,\alpha_2,p_2,q_1).
\end{align}
In the following Proposition we establish the power-series representation
of $g(x)$.
\begin{proposition}
\label{thm:prop1}
Consider two independent random variables $X_1,X_2$, with strictly stable
probability densities $f_1(x;\alpha_1,p_1)$ and $f_2(x;\alpha_2,q_1)$, 
respectively. Assume that the stability parameters $\alpha_1,\alpha_2$ 
are either $1<\alpha_1,\alpha_2\leq 2$ or satisfy condition that $2/3<\alpha_1<1$ 
and $\alpha_2>\tfrac{\alpha_1}{2\alpha_1-1}$. The asymmetry parameters $p_1,q_1$
can have any admissible values except 0. 

Then for $x\neq0$, the density $g(x)$ of the random variable $X=X_1\cdot X_2$
has the power series:
\begin{subequations}
\begin{align}
\label{eqn:st_prod1}
g(x;\alpha_1,\alpha_2,p_1,q_1)&=\tfrac{2}{\pi^2}\sum_{k=1}^{\infty}
\Gamma\left(\tfrac{k}{\alpha_1}+1\right)
\Gamma\left(\tfrac{k}{\alpha_2}+1\right)
\sin(p_1k\pi)\sin(q_1k\pi)\xi(k)
\frac{x^{k-1}}{(k!)^2}\\
\nonumber
&+\tfrac{p_2-q_1}{\pi}\sum_{k=1}^{\infty}
\Gamma\left(\tfrac{k}{\alpha_1}+1\right)
\Gamma\left(\tfrac{k}{\alpha_2}+1\right)\sin([p_1+q_1]k\pi)
\frac{x^{k-1}}{(k!)^2}\\
\nonumber
&+\tfrac{p_1-q_1}{\pi}\sum_{k=1}^{\infty}
\Gamma\left(\tfrac{k}{\alpha_1}+1\right)
\Gamma\left(\tfrac{k}{\alpha_2}+1\right)\sin([p_1-q_1]k\pi)
\frac{x^{k-1}}{(k!)^2},
\end{align}
where $\xi(k)$ designates the auxiliary function given by :
\begin{align}
\xi(k)\equiv2\psi(k)-\tfrac{1}{\alpha_1}\psi\left(\tfrac{k}{\alpha_1}\right)-
\tfrac{1}{\alpha_2}\psi\left(\tfrac{k}{\alpha_2}\right)-\log |x|,
\end{align}
and $\psi(k)$ is a digamma function.
\end{subequations}
\end{proposition}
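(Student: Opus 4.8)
The plan is to evaluate the Mellin--Barnes integrals defining $g(x)$ by the Residue Theorem, collecting the residues lying to the left of the integration path into a power series in $x$. For $x>0$ I start from $g(x)=\frac{1}{2\pi i}\int_{a-i\infty}^{a+i\infty}x^{-s}G_1(s)\,\d s$ with $G_1$ as in (\ref{eqn:G1}), so it suffices to treat each of the two summands $F_1(s;\alpha_1,p_1)F_2(s;\alpha_2,q_1)$ and $F_1(s;\alpha_1,p_2)F_2(s;\alpha_2,q_2)$, i.e. the integral $\tilde g$ of (\ref{eqn:st_mb}) under the two choices of asymmetry parameters; the case $x<0$ is identical with $G_2$ from (\ref{eqn:G2}). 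Substituting the explicit Mellin transform (\ref{eqn:mellin}), the first summand reads
\begin{align*}
F_1(s;\alpha_1,p_1)F_2(s;\alpha_2,q_1)=\tfrac{1}{\alpha_1\alpha_2\pi^2}\Gamma(s)^2\,\Gamma\!\left(\tfrac{1-s}{\alpha_1}\right)\Gamma\!\left(\tfrac{1-s}{\alpha_2}\right)\sin(p_1[1-s]\pi)\sin(q_1[1-s]\pi).
\end{align*}
The only poles to the left of the strip $0<a<1$ come from $\Gamma(s)^2$, which has \emph{double} poles at $s=1-k$, $k=1,2,\ldots$; the factors $\Gamma(\tfrac{1-s}{\alpha_j})$ have poles only at $s=1+\alpha_j m\ge1$, to the right, and the sines are entire. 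Closing the contour to the left and summing residues then expresses $\tilde g$ as $\sum_{k\ge1}\res_{s=1-k}$, the vanishing of the connecting arcs and the convergence of the resulting series being exactly what the hypotheses on $\alpha_1,\alpha_2$ secure, as established in Lemmas \ref{thm:lem1} and \ref{thm:lem2}.

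Next I would compute the double-pole residue. Writing $\Gamma(s)=\tfrac{(-1)^{k-1}}{(k-1)!}\bigl[\tfrac{1}{s-(1-k)}+\psi(k)+O(s-(1-k))\bigr]$ near $s=1-k$ gives $(s-(1-k))^2\Gamma(s)^2=\tfrac{1}{((k-1)!)^2}\bigl[1+2\psi(k)(s-(1-k))+\cdots\bigr]$, so the residue is the derivative at $s=1-k$ of the product of this regular part with $\Gamma(\tfrac{1-s}{\alpha_1})\Gamma(\tfrac{1-s}{\alpha_2})\sin(p_1[1-s]\pi)\sin(q_1[1-s]\pi)x^{-s}$. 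The logarithmic-derivative rule produces exactly five contributions, evaluated at $1-s=k$: the factor $2\psi(k)$ from $\Gamma(s)^2$; the terms $-\tfrac{1}{\alpha_1}\psi(\tfrac{k}{\alpha_1})$ and $-\tfrac{1}{\alpha_2}\psi(\tfrac{k}{\alpha_2})$ from the two gamma factors; the term $-\log x$ from differentiating $x^{-s}$; and cotangent terms $-p_1\pi\cot(p_1k\pi)-q_1\pi\cot(q_1k\pi)$ from the sines. Using $\Gamma(\tfrac{k}{\alpha_j})=\tfrac{\alpha_j}{k}\Gamma(\tfrac{k}{\alpha_j}+1)$ and $((k-1)!)^2k^2=(k!)^2$, the prefactor collapses to $\tfrac{1}{\pi^2(k!)^2}\Gamma(\tfrac{k}{\alpha_1}+1)\Gamma(\tfrac{k}{\alpha_2}+1)\sin(p_1k\pi)\sin(q_1k\pi)x^{k-1}$, and (for $x>0$, where $\log x=\log|x|$) the bracket is $\xi(k)-p_1\pi\cot(p_1k\pi)-q_1\pi\cot(q_1k\pi)$.

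It remains to add the second summand, obtained by $p_1\mapsto p_2$, $q_1\mapsto q_2$. Using $p_2=1-p_1$, $q_2=1-q_1$ one checks $\sin(p_2k\pi)\sin(q_2k\pi)=\sin(p_1k\pi)\sin(q_1k\pi)$ and $\cot(p_2k\pi)=-\cot(p_1k\pi)$, so the two $\xi(k)$-parts add to give the first sum of (\ref{eqn:st_prod1}) with coefficient $\tfrac{2}{\pi^2}$, while the cotangent parts combine into $(p_2-p_1)\pi\cot(p_1k\pi)+(q_2-q_1)\pi\cot(q_1k\pi)$ against the same prefactor. Absorbing the cotangents via $\sin(p_1k\pi)\cot(p_1k\pi)=\cos(p_1k\pi)$ and applying $\cos A\sin B=\tfrac12[\sin(A{+}B)-\sin(A{-}B)]$, $\sin A\cos B=\tfrac12[\sin(A{+}B)+\sin(A{-}B)]$ together with $p_2-p_1=1-2p_1$, $q_2-q_1=1-2q_1$ and $1-p_1-q_1=p_2-q_1$ turns them into precisely the $\sin([p_1+q_1]k\pi)$ and $\sin([p_1-q_1]k\pi)$ sums with coefficients $\tfrac{p_2-q_1}{\pi}$ and $\tfrac{p_1-q_1}{\pi}$, completing (\ref{eqn:st_prod1}). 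The main obstacle is not this algebra but the analytic justification of the contour closing and the convergence of the residue series, which is exactly why the growth restrictions are imposed and why the verification is isolated in Lemmas \ref{thm:lem1} and \ref{thm:lem2}; indeed those conditions amount to $\tfrac{1}{\alpha_1}+\tfrac{1}{\alpha_2}<2$, which by Stirling's formula is precisely what makes $\Gamma(\tfrac{k}{\alpha_1}+1)\Gamma(\tfrac{k}{\alpha_2}+1)/(k!)^2$ decay fast enough.
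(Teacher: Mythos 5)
Your proposal is correct and follows essentially the same route as the paper: invert the Mellin transform of $G_1$, close the contour to the left around the double poles of $[\Gamma(s)]^2$ at $s=1-k$, extract the double-pole residues (digamma, $\log x$, and cotangent contributions), and use product-to-sum trigonometric identities together with $p_1+p_2=q_1+q_2=1$ to recombine the $(p_1,q_1)$ and $(p_2,q_2)$ summands into (\ref{eqn:st_prod1}), with the arc estimate hinging on $\tfrac{1}{\alpha_1}+\tfrac{1}{\alpha_2}<2$. The only differences are cosmetic: you carry the cotangent form through the addition of the two summands where the paper converts to sine sums already inside Lemma \ref{thm:lem1}, and you assert rather than verify that the $G_2$ computation for $x<0$ reproduces the same series at $-x$ (the paper checks $g_2(x)=g_1(-x)$ explicitly), neither of which changes the substance.
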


From (\ref{eqn:st_prod1}) we see that the commutative property 
of multiplication is satisfied.

For some specific choices of stability and asymmetry parameters of stable law,
the sum given in (\ref{eqn:st_prod1}) simplifies. 
Consider the special case of $\alpha_1=\alpha_2=\alpha$,
with $1<\alpha\leq2$. Let us consider the case when one stable law 
is symmetric, say, e.g. $q_1=1/2$,
and the second stable density is asymmetric, i.e. $p_1\neq1/2$. 
Then the power series (\ref{eqn:st_prod1}) becomes:
\begin{align*}
g(x;\alpha,p_1,1/2)&=\tfrac{2}{\alpha^2\pi^2}\sum_{k=0}^{\infty}(-1)^k
\left[\Gamma\left(\tfrac{2k+1}{\alpha}\right)\right]^2
\sin(p_1[2k+1]\pi)\xi(2k+1)
\frac{x^{2k}}{[(2k)!]^2}-\\
&-\tfrac{p_1-p_2}{\alpha^2\pi}\sum_{k=0}^{\infty}(-1)^k
\left[\Gamma\left(\tfrac{2k+1}{\alpha}\right)\right]^2
\cos(p_1[2k+1]\pi)\frac{x^{2k}}{[(2k)!]^2}.
\end{align*}
It is easy to verify that the latter series is a power-series of a symmetric
function. Therefore, we conclude that by multiplying a stable random variable 
with a symmetric stable random variable, we obtain symmetric law.

The following Proposition is a consequence of Proposition \ref{thm:prop1}
and summarizes the problem when $X_1,X_2$ are independent and identically 
distributed stable random variables with the density $f(x;\alpha,p_1)$ 
and $1<\alpha\leq2$. 

\begin{proposition}
\label{thm:prop2}
Consider random variable $X=X_1\cdot X_2$. The random variables $X_1,X_2$ 
are independent and identically distributed, with strictly stable density 
$f(x;\alpha,p_1)$. It is assumed that $1<\alpha\leq 2$, and the asymmetry 
parameter $p_1$ has any admissible value. 

For $x\neq0$ the density $g(x)$ of random variable $X$ 
is represented by the power series:
\begin{subequations}
\begin{align}
\label{eqn:st_prod2a}
g(x;&\alpha,p_1)=\tfrac{2}{\pi^2}\sum_{k=1}^{\infty}
\left[\Gamma\left(\tfrac{k}{\alpha}+1\right)\right]^2
[\sin(p_1k\pi)]^2\tilde{\xi}(k)\frac{x^{k-1}}{(k!)^2}
\end{align}
for the asymmetric strictly stable law, i.e. $p_1\neq 1/2$.
For symmetric stable law, i.e. when $p_1=1/2$, we have:
\begin{align}
\label{eqn:st_prod2b}
g(x;\alpha,1/2)&=\tfrac{2}{\alpha^2\pi^2}\sum_{k=0}^{\infty}
\left[\Gamma\left(\tfrac{2k+1}{\alpha}\right)\right]^2\xi(2k+1)
\frac{x^{2k}}{(2k!)^2}.
\end{align}
The auxiliary functions $\tilde{\xi}(k)$ and $\xi(k)$ are defined
as follows:
\begin{align}
\xi(k)&=2\psi(k)-\tfrac{2}{\alpha}\psi\left(\tfrac{k}{\alpha}\right)-\log |x|,\\
\tilde{\xi}(k)&=\tfrac{1}{2}\xi(k)+(p_1-p_2)\psi(p_1 k)-(p_1-p_2)\psi(1-p_1 k).
\end{align}
\end{subequations}
\end{proposition}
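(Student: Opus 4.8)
The plan is to obtain Proposition \ref{thm:prop2} by specializing the general formula (\ref{eqn:st_prod1}) of Proposition \ref{thm:prop1} to the identically distributed case, that is, by setting $\alpha_1=\alpha_2=\alpha$ and $q_1=p_1$ (so that $q_2=p_2$), and then collapsing the three sums into one. First I would substitute these values into (\ref{eqn:st_prod1}). The prefactor of the third sum is $p_1-q_1$, which vanishes, so that sum disappears. The argument of the general $\xi(k)$ specializes to $2\psi(k)-\tfrac{2}{\alpha}\psi(k/\alpha)-\log|x|$, matching the $\xi(k)$ declared in the statement, and the first sum becomes $\tfrac{2}{\pi^2}\sum_k[\Gamma(k/\alpha+1)]^2[\sin(p_1k\pi)]^2\xi(k)\,x^{k-1}/(k!)^2$.

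The essential step is to fold the surviving second sum into the first. After the substitution its sine is $\sin([p_1+p_1]k\pi)=\sin(2p_1k\pi)$ and its prefactor is $p_2-p_1$. I would apply the double-angle identity $\sin(2p_1k\pi)=2\sin(p_1k\pi)\cos(p_1k\pi)$ so that a factor $\sin^2(p_1k\pi)$ can be extracted, matching the factor already present in the first sum. Pulling out the common coefficient $\tfrac{2}{\pi^2}[\Gamma(k/\alpha+1)]^2\sin^2(p_1k\pi)\,x^{k-1}/(k!)^2$ then leaves a bracket of the form $\xi(k)+\pi(p_2-p_1)\cot(p_1k\pi)$ at each index $k$. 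The last manipulation is to convert the cotangent into digamma functions through the reflection identity $\psi(1-z)-\psi(z)=\pi\cot(\pi z)$ with $z=p_1k$, which turns $\pi(p_2-p_1)\cot(p_1k\pi)$ into $(p_1-p_2)\psi(p_1k)-(p_1-p_2)\psi(1-p_1k)$ and so assembles the auxiliary function $\tilde{\xi}(k)$ of (\ref{eqn:st_prod2a}).

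For the symmetric sub-case $p_1=1/2$ of (\ref{eqn:st_prod2b}) I would not use $\tilde{\xi}$ but argue directly: $\sin^2(k\pi/2)$ kills every even index, so only odd $k=2j+1$ contribute, while the digamma correction disappears because $p_1-p_2=0$. Reindexing and using $\Gamma(\tfrac{2j+1}{\alpha}+1)=\tfrac{2j+1}{\alpha}\Gamma(\tfrac{2j+1}{\alpha})$ together with $(2j+1)!=(2j+1)(2j)!$ cancels the factor $(2j+1)^2$ between numerator and denominator and produces the prefactor $\tfrac{2}{\alpha^2\pi^2}$ with the factorial $(2k)!$ in the denominator, giving (\ref{eqn:st_prod2b}).

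The step I expect to be the main obstacle is the constant bookkeeping when the second sum is absorbed into the first: tracking the $\pi$-powers and the factor of $2$ arising from the double-angle identity, and in particular pinning down the exact coefficient of $\xi(k)$ inside $\tilde{\xi}(k)$. I would settle this coefficient unambiguously by demanding that the asymmetric series (\ref{eqn:st_prod2a}) reduce to the symmetric series (\ref{eqn:st_prod2b}) as $p_1\to 1/2$, which forces the normalization and serves as an independent check on the whole computation.
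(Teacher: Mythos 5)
Your route is essentially the paper's own, shifted one level up: the paper specializes the residue formula (\ref{eqn:res1}) of Lemma \ref{thm:lem1} to $\alpha_1=\alpha_2=\alpha$, $q_1=p_1$ and adds the $(p_1,p_1)$ and $(p_2,p_2)$ contributions, while you specialize the already assembled series (\ref{eqn:st_prod1}); in both versions the work is done by exactly the two identities you invoke, $\sin(2p_1k\pi)=2\sin(p_1k\pi)\cos(p_1k\pi)$ and $\psi(1-z)-\psi(z)=\pi\cot(\pi z)$. Your handling of the symmetric sub-case is also correct: only odd $k=2j+1$ survive, the factor $(2j+1)^2$ cancels between $\left[\Gamma\left(\tfrac{2j+1}{\alpha}+1\right)\right]^2$ and $((2j+1)!)^2$, and the digamma correction dies because $p_1-p_2=0$.

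However, the coefficient you singled out as the main obstacle is precisely where your (correct) computation parts company with the printed proposition. Absorbing the middle sum of (\ref{eqn:st_prod1}) into the first gives the bracket $\xi(k)+\pi(p_2-p_1)\cot(p_1k\pi)$, i.e.
\begin{align*}
\tilde{\xi}(k)=\xi(k)+(p_1-p_2)\psi(p_1k)-(p_1-p_2)\psi(1-p_1k),
\end{align*}
with coefficient $1$ on $\xi(k)$, whereas the statement has $\tfrac12\xi(k)$. Your proposed limit check settles the matter in favor of $1$: at $p_1=1/2$ the cotangent/digamma term vanishes, and only the coefficient $1$ makes (\ref{eqn:st_prod2a}) collapse onto (\ref{eqn:st_prod2b}); with $\tfrac12\xi(k)$ one obtains exactly half of (\ref{eqn:st_prod2b}). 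Since (\ref{eqn:st_prod2b}) is the series consistent with Proposition \ref{thm:prop1} and with the classical Gaussian check (at $\alpha=2$ it sums to $\tfrac{1}{2\pi}K_0(|x|/2)$, the density of the product of two independent normal variables of variance $2$), it is the printed $\tilde{\xi}$ that is wrong, not your derivation. The slip in the paper's own proof is easy to localize: writing $A=2\psi(k+1)-\tfrac{2}{\alpha}\psi\left(\tfrac{k+1}{\alpha}\right)-\log x$, the two residue brackets being added are $A-2p_1\pi\cot(p_1[1+k]\pi)$ and $A+2p_2\pi\cot(p_1[1+k]\pi)$, whose sum is $2\left\{A-(p_1-p_2)\pi\cot(p_1[1+k]\pi)\right\}$; the paper records it as $2\left\{\tfrac12 A-(p_1-p_2)\pi\cot(p_1[1+k]\pi)\right\}$, halving the non-cotangent part once too often. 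So carry your plan through as written: it proves the corrected form of Proposition \ref{thm:prop2}, and the $p_1\to 1/2$ consistency check should be kept as part of the argument, since it is what exposes the error in the stated $\tilde{\xi}(k)$.
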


The series (\ref{eqn:st_prod2b}) further simplifies for $\alpha=2$. 
Using Duplication formulas for the gamma function 6.1.18 in 
\cite{abramowitz}, and the digamma function 6.3.8 in \cite{abramowitz}, 
respectively, we get:
\begin{align*}
g(x;2)&=\tfrac{1}{\pi}\sum_{k=0}^{\infty}
\frac{1}{(k!)^2}\Big\{\psi(k+1)-\log\tfrac{x}{4}\Big\}
\left(\tfrac{x}{4}\right)^{2k}.
\end{align*}
It can be verified that this equals $\tfrac{1}{\pi}K_0(x/2)$.
This is the density of the product of two independent normal random variables,
see e.g.\cite{epstein}.

The power series representation for density of product of 
two stable random variables given by (\ref{eqn:st_prod1})-(\ref{eqn:st_prod2b}) 
assumed mainly $1<\alpha\leq 2$. The following proposition covers the case 
for $0<\alpha<1$. 
\begin{proposition}
\label{thm:prop3}
Consider two independent random variables $X_1,X_2$, with strictly stable
probability densities $f_1(x;\alpha_1,p_1)$ and $f_2(x;\alpha_2,q_1)$, 
respectively. Assume that for the stability parameters $\alpha_1,\alpha_2$ holds
that either $0<\alpha_1,\alpha_2<1$ or stability
parameters $\alpha_1,\alpha_2$ satisfy the condition: $\alpha_1>2/3$
and $\alpha_2<\tfrac{\alpha_1}{2\alpha_1-1}$. 
The asymmetry parameters $p_1,q_1$ can have any admissible values
except 0. 

Then for $x>0$, the density $g(x)$ of random variable $X=X_1\cdot X_2$ is
given by power series:
\begin{subequations}
\begin{align}
\label{eqn:st_prod3a}
g(x;&\alpha_1,\alpha_2,p_1,q_1)=\\
\nonumber
&=\tfrac{1}{\alpha_1\pi^2}\sum_{k=1}^{\infty}
\tfrac{(-1)^{k+1}}{(k-1)!}
\left[\Gamma(\alpha_1 k+1)\right]^2
\Gamma\left(1-\tfrac{\alpha_1}{\alpha_2}k\right)
\phi(k;\alpha_1,p_1,q_1)x^{-\alpha_1k-1}\\
\nonumber
&+\tfrac{1}{\alpha_2\pi^2}\sum_{k=1}^{\infty}
\tfrac{(-1)^{k+1}}{(k-1)!}
\left[\Gamma(\alpha_2 k+1)\right]^2
\Gamma\left(1-\tfrac{\alpha_2}{\alpha_1}k\right)
\phi(k;\alpha_2,p_1,q_1)x^{-\alpha_2k-1}
\end{align}
for $\alpha_1\neq\alpha_2$, where we have defined:
\begin{align}
\phi(k;\alpha,p_1,q_1)\equiv\cos(\alpha[p_1-q_1]k\pi)-
\cos(\alpha k\pi)\cos(\alpha [p_1-q_2]k\pi)
\end{align}
For $\alpha_1=\alpha_2=\alpha$ we have the power series:
\begin{align}
\label{eqn:st_prod3b}
g(x;\alpha,p_1,q_1)&=\tfrac{1}{\pi^2}\sum_{k=1}^{\infty}
\left[\Gamma\left(\alpha k+1\right)\right]^2
\phi(k;\alpha,p_1,q_1)\zeta(k)
\frac{x^{-\alpha k-1}}{(k!)^2}\\
\nonumber
&-\tfrac{p_1+q_1}{2\pi}\sum_{k=1}^{\infty}
\left[\Gamma\left(\alpha k+1\right)\right]^2
\sin(\alpha[p_1+q_1] k \pi)\frac{x^{-\alpha k-1}}{(k!)^2}\\
\nonumber
&-\tfrac{p_2+q_2}{2\pi}\sum_{k=1}^{\infty}
\left[\Gamma\left(\alpha k+1\right)\right]^2
\sin(\alpha[p_2+q_2] k \pi)\frac{x^{-\alpha k-1}}{(k!)^2}\\
\nonumber
&+\tfrac{p_1-q_1}{\pi}\sum_{k=1}^{\infty}
\left[\Gamma\left(\alpha k+1\right)\right]^2
\sin(\alpha [p_1-q_1] k \pi)\frac{x^{-\alpha k-1}}{(k!)^2},
\end{align}
where:
\begin{align}
\label{eqn:zeta}
\zeta(k)\equiv \tfrac{2}{\alpha}\psi(k+1)-2\psi\left(\alpha k+1\right)
+\log |x|.
\end{align}
\end{subequations}
\end{proposition}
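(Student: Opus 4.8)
The plan is to read the density off the Mellin--Barnes representation of $G_1(s)$ in (\ref{eqn:G1}). Substituting (\ref{eqn:mellin}) gives, for the first summand,
\[
F_1(s;\alpha_1,p_1)F_2(s;\alpha_2,q_1)=\tfrac{1}{\alpha_1\alpha_2\pi^2}\Gamma(s)^2\Gamma\!\left(\tfrac{1-s}{\alpha_1}\right)\Gamma\!\left(\tfrac{1-s}{\alpha_2}\right)\sin(p_1[1-s]\pi)\sin(q_1[1-s]\pi),
\]
and analogously for the $(p_2,q_2)$ summand. The integrand thus has poles of $\Gamma(s)^2$ at $s=0,-1,-2,\dots$ to the left of the contour $0<a<1$ (these generate the ascending series of Proposition~\ref{thm:prop1}) and poles of $\Gamma((1-s)/\alpha_i)$ at $s=1+\alpha_i k$, $k\ge 1$, to the right, where the double zero of the two sines cancels the would-be pole at $s=1$. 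Since for $0<\alpha<1$ the descending expansion is the convergent one, cf.\ the remark following (\ref{eqn:std4}), I would close the Bromwich contour to the right; the Residue Theorem then gives $g(x)$ as minus the sum of the right half-plane residues, the extra sign reflecting the clockwise orientation.

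For $\alpha_1\neq\alpha_2$ the two families $s=1+\alpha_1 k$ and $s=1+\alpha_2 k$ consist of simple poles. At $s=1+\alpha_1 k$ only $\Gamma((1-s)/\alpha_1)$ is singular; its residue is $(-1)^{k+1}\alpha_1/k!$, from $\Gamma(z)\sim(-1)^k/[k!(z+k)]$ together with the Jacobian $\mathrm{d}z/\mathrm{d}s=-1/\alpha_1$. Evaluating the remaining factors at the pole, using $1-s=-\alpha_1 k$ and the functional equation $\Gamma(-\tfrac{\alpha_1}{\alpha_2}k)=-\tfrac{\alpha_2}{\alpha_1 k}\Gamma(1-\tfrac{\alpha_1}{\alpha_2}k)$ to reach the stated gamma factor, leads to the first sum of (\ref{eqn:st_prod3a}); the second sum follows symmetrically from $s=1+\alpha_2 k$. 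The two sine products (from the $(p_1,q_1)$ and $(p_2,q_2)$ summands) collapse to $\phi$: writing $p_2=1-p_1$, $q_2=1-q_1$ and applying the product-to-sum identity gives $\sin(p_1\theta)\sin(q_1\theta)+\sin(p_2\theta)\sin(q_2\theta)=\phi(k;\alpha_1,p_1,q_1)$ with $\theta=\alpha_1 k\pi$, exactly the defining combination. One caveat to record: when $\alpha_1/\alpha_2$ is rational the two families overlap, the factor $\Gamma(1-\tfrac{\alpha_1}{\alpha_2}k)$ develops poles, and these coincidences must be handled as double poles as in the next case; (\ref{eqn:st_prod3a}) is valid away from such resonances.

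For $\alpha_1=\alpha_2=\alpha$ the poles merge and $\Gamma((1-s)/\alpha)^2$ has a double pole at $s_\ast=1+\alpha k$, with Laurent expansion
\[
\Gamma\!\left(\tfrac{1-s}{\alpha}\right)^2=\tfrac{1}{(k!)^2}\left[\tfrac{\alpha^2}{(s-s_\ast)^2}-\tfrac{2\alpha\psi(k+1)}{s-s_\ast}+\cdots\right].
\]
The residue is then obtained by differentiating the analytic remainder $\Gamma(s)^2\,[\text{sine products}]\,x^{-s}$. Differentiating $\Gamma(s)^2$ produces $\psi(\alpha k+1)$ and differentiating $x^{-s}$ produces $\log|x|$; together with the $\psi(k+1)$ from the Laurent tail these assemble into $\zeta(k)$ of (\ref{eqn:zeta}), multiplying $\phi(k;\alpha,p_1,q_1)$, which is the first sum of (\ref{eqn:st_prod3b}). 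Differentiating the sine products instead brings down factors $p_i\pi,q_i\pi$ and, after the same product-to-sum reduction and $p_2=1-p_1$, $q_2=1-q_1$, yields exactly the three remaining sums with coefficients $\tfrac{p_1+q_1}{2\pi}$, $\tfrac{p_2+q_2}{2\pi}$, $\tfrac{p_1-q_1}{\pi}$.

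The step I expect to be the real obstacle is the justification of closing the contour to the right and interchanging summation with integration, which is precisely what the hypotheses on $\alpha_1,\alpha_2$ encode. Estimating the $k$-th term by Stirling, using $|\Gamma(1-\tfrac{\alpha_1}{\alpha_2}k)|\sim 1/\Gamma(\tfrac{\alpha_1}{\alpha_2}k)$ by reflection, the first family grows like $\exp\{(2\alpha_1-1-\alpha_1/\alpha_2)\,k\log k\}$ to leading order, so convergence for $x\neq0$ forces $2\alpha_1-1-\alpha_1/\alpha_2\le 0$, i.e.\ $\alpha_2\le\alpha_1/(2\alpha_1-1)$ once $\alpha_1>1/2$; the second family imposes the symmetric inequality. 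These are met precisely in the stated regimes ($0<\alpha_1,\alpha_2<1$, or $\alpha_1>2/3$ with $\alpha_2<\alpha_1/(2\alpha_1-1)$), which are complementary to those of Proposition~\ref{thm:prop1}, where the left expansion is used instead. Making this rigorous would invoke the decay of the gamma factors on the expanding right-hand arcs (Stirling bounds away from the poles) so that the arc integrals vanish and the sum of residues equals $g(x)$; this is also where the exclusion of $p_1,q_1=0$ enters, keeping the sine factors from degenerating.
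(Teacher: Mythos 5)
You take essentially the same route as the paper: the paper proves this proposition through Lemma \ref{thm:lem2} --- closing the Bromwich contour to the right (clockwise, hence minus the sum of residues), evaluating simple poles at $s=1+\alpha_i k$ when $\alpha_1\neq\alpha_2$ and double poles at $s=1+\alpha k$ when $\alpha_1=\alpha_2$, with the arc integral killed by Stirling bounds exactly under $\alpha_1+\alpha_2>2\alpha_1\alpha_2$ --- and then combines $\tilde{g}(x;\alpha_1,\alpha_2,p_1,q_1)+\tilde{g}(x;\alpha_1,\alpha_2,p_2,q_2)$ via the same product-to-sum identity that produces $\phi$. Your choice to work directly with $G_1(s)$ rather than through the lemma is only organizational.

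There is, however, one concrete gap: the assertion that the simple-pole computation ``leads to the first sum of (\ref{eqn:st_prod3a})''. Carry your own recipe through: the residue of $\Gamma\left(\tfrac{1-s}{\alpha_1}\right)$ at $s=1+\alpha_1 k$ is $(-1)^{k+1}\alpha_1/k!$, the remaining factors contribute $\left[\Gamma(\alpha_1k+1)\right]^2\Gamma\left(-\tfrac{\alpha_1}{\alpha_2}k\right)\sin(\alpha_1p_1k\pi)\sin(\alpha_1q_1k\pi)x^{-\alpha_1k-1}$, and the functional equation replaces $\Gamma\left(-\tfrac{\alpha_1}{\alpha_2}k\right)$ by $-\tfrac{\alpha_2}{\alpha_1 k}\Gamma\left(1-\tfrac{\alpha_1}{\alpha_2}k\right)$. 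The $k$-th coefficient of the first family is therefore proportional to
\[
\frac{(-1)^{k+1}}{k\cdot k!}\left[\Gamma(\alpha_1k+1)\right]^2\Gamma\left(1-\tfrac{\alpha_1}{\alpha_2}k\right),
\]
with $1/(k\cdot k!)$, not the $1/(k-1)!$ appearing in (\ref{eqn:st_prod3a}); the two differ by a factor $k^2$ for $k\geq2$ and agree only at $k=1$, which is what hides the slip. This is in fact an error inside the paper: in the proof of Lemma \ref{thm:lem2}, the factor $1/(\alpha_1 k)$ coming from the functional equation was in effect multiplied rather than divided, turning $\tfrac{1}{k\cdot k!}$ into $\tfrac{1}{(k-1)!}$. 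Two checks settle it in favour of $1/(k\cdot k!)$: a direct evaluation (take $\alpha_1=\tfrac{1}{2}$, $k=2$, $s=2$, where the residue of $\Gamma(2(1-s))$ equals $-\tfrac{1}{4}$ and the term comes out one quarter of the paper's value), and the merging limit $\alpha_2\to\alpha_1$, where $\Gamma\left(1-\tfrac{\alpha_1}{\alpha_2}k\right)$ blows up like $(-1)^{k}/[(k-1)!\,\delta]$ and the product $\tfrac{1}{k\cdot k!}\cdot\tfrac{1}{(k-1)!}=\tfrac{1}{(k!)^2}$ is exactly what matches the $1/(k!)^2$ normalization of the double-pole series (\ref{eqn:st_prod3b}), whereas $1/(k-1)!$ would give $k^{2}/(k!)^2$ and contradict it. So your method is sound, but executed correctly it proves a corrected version of (\ref{eqn:st_prod3a}); as written, you either never did this arithmetic or silently reproduced the paper's slip, and a referee must count that unverified matching step as the gap. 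On the positive side, your resonance caveat is a point the paper misses: Lemma \ref{thm:lem2} claims all right-hand poles are simple whenever $\alpha_1\neq\alpha_2$, which fails when $\alpha_1/\alpha_2$ is rational and coincident poles $\alpha_1 k=\alpha_2 j$ must be treated as double, exactly as you say.
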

For computation of values for $x<0$, we use reflection property 
(\ref{eqn:stprod_refl2}).

\begin{remark}
When $X_1,X_2$ are independent identically distributed stable random
variables with density $f(x;\alpha,p_1)$ and $0<\alpha<1$, 
the density of the product $X_1\cdot X_2$ reduces to:
\begin{subequations}
\begin{align}
\label{eqn:st_prod3c}
g(x;\alpha,p_1)&=
\tfrac{2}{\pi^2}\sum_{k=1}^{\infty}
\left[\Gamma\left(\alpha k+1\right)\right]^2
[\sin(\alpha p_1k\pi)]^2\tilde{\zeta}(k,p_1)\frac{x^{-\alpha k-1}}{(k!)^2}+\\
\nonumber
&+\tfrac{2}{\pi^2}\sum_{k=1}^{\infty}
\left[\Gamma\left(\alpha k+1\right)\right]^2
[\sin(\alpha p_2k\pi)]^2\tilde{\zeta}(k,p_2)\frac{x^{-\alpha k-1}}{(k!)^2},
\end{align}
where 
\begin{align}
\tilde{\zeta}(k,p_1)&=\tfrac{1}{2}\zeta(k)+p_1\psi(\alpha p_1 k)
-p_1\psi(1-\alpha p_1 k),
\end{align}
\end{subequations}
and $\zeta(k)$ is defined in (\ref{eqn:zeta}).
\end{remark}

\section{Proofs.}
In this section we give proofs to Propositions 
\ref{thm:prop1}-\ref{thm:prop3} established in the previous section. 
Let us start with Lemma, which gives power series of function $\tilde{g}(x)$ 
defined in (\ref{eqn:st_mb}).  
\begin{lemma}
\label{thm:lem1}
For $x>0$ the function $\tilde{g}(x)$ defined by Mellin-Barnes integral as:
\begin{subequations}
\begin{align}
\label{eqn:st_mb1}
\frac{1}{2\pi i}\int_{a-i\infty}^{a+i\infty}
\left[\Gamma(s)\right]^2
\Gamma\left(\tfrac{1-s}{\alpha_1}\right)
\Gamma\left(\tfrac{1-s}{\alpha_2}\right)
\sin(p_1[1-s]\pi)\sin(q_1[1-s]\pi)x^{-s} 
\tfrac{\mathrm{d}s}{\alpha_1\alpha_2\pi^2},
\end{align}
where $0<a<1$, $\alpha_1+\alpha_2<2\alpha_1\alpha_2$,
and $p_1,q_1\neq 0$,
has the power series representation as follows:
\begin{align}
\label{eqn:st_prod11}
\tilde{g}(x)&=\tfrac{1}{\pi^2}\sum_{k=1}^{\infty}
\Gamma\left(\tfrac{k}{\alpha_1}+1\right)
\Gamma\left(\tfrac{k}{\alpha_2}+1\right)
\sin(p_1k\pi)\sin(q_1k\pi)\xi(k)
\frac{x^{k-1}}{(k!)^2}\\
\nonumber
&-\tfrac{p_1+q_1}{2\pi}\sum_{k=1}^{\infty}
\Gamma\left(\tfrac{k}{\alpha_1}+1\right)
\Gamma\left(\tfrac{k}{\alpha_2}+1\right)\sin([p_1+q_1]k\pi)
\frac{x^{k-1}}{(k!)^2}\\
\nonumber
&+\tfrac{p_1-q_1}{2\pi}\sum_{k=1}^{\infty}
\Gamma\left(\tfrac{k}{\alpha_1}+1\right)
\Gamma\left(\tfrac{k}{\alpha_2}+1\right)\sin([p_1-q_1]k\pi)
\frac{x^{k-1}}{(k!)^2},
\end{align}
where $\xi(k)$ designates the auxiliary function given by :
\begin{align}
\xi(k)\equiv2\psi(k)-\tfrac{1}{\alpha_1}\psi\left(\tfrac{k}{\alpha_1}\right)-
\tfrac{1}{\alpha_2}\psi\left(\tfrac{k}{\alpha_2}\right)-\log x,
\end{align}
and $\psi(k)$ is digamma function.
\end{subequations}
\end{lemma}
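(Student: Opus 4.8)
The plan is to evaluate the Mellin--Barnes integral in (\ref{eqn:st_mb1}) by the Residue Theorem, closing the vertical contour $\Re s=a$ to the left and summing the residues at the poles lying in $\Re s<a$. The factors $\Gamma\!\left(\tfrac{1-s}{\alpha_1}\right)$ and $\Gamma\!\left(\tfrac{1-s}{\alpha_2}\right)$ have their poles at $s=1+m\alpha_1$ and $s=1+m\alpha_2$ ($m\ge 0$), all lying to the right of the contour, and the two sine factors are entire; hence the only poles captured on the left are the \emph{double} poles of $[\Gamma(s)]^2$ at $s=-n$, $n=0,1,2,\dots$. Writing $k=n+1$ so that $x^{-s}=x^{k-1}$ at $s=-n$, these poles produce exactly the powers $x^{k-1}$, $k\ge 1$, appearing in (\ref{eqn:st_prod11}). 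Before computing residues I would first record the growth of the integrand on a large semicircular arc in the left half-plane and show that the arc contribution tends to zero; the hypothesis $\alpha_1+\alpha_2<2\alpha_1\alpha_2$, equivalently $\tfrac1{\alpha_1}+\tfrac1{\alpha_2}<2$, is what makes this possible.

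For the residues I would use the Laurent expansion $\Gamma(s)=\tfrac{(-1)^n}{n!}\bigl[(s+n)^{-1}+\psi(n+1)+O(s+n)\bigr]$ near $s=-n$, so that $(s+n)^2[\Gamma(s)]^2=\tfrac{1}{(n!)^2}\bigl[1+2\psi(n+1)(s+n)+\cdots\bigr]$. Denoting by $h(s)$ the analytic part $\Gamma\!\left(\tfrac{1-s}{\alpha_1}\right)\Gamma\!\left(\tfrac{1-s}{\alpha_2}\right)\sin(p_1[1-s]\pi)\sin(q_1[1-s]\pi)x^{-s}$, the residue at the double pole equals $\tfrac{1}{\alpha_1\alpha_2\pi^2(n!)^2}\bigl[2\psi(n+1)\,h(-n)+h'(-n)\bigr]$. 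The value $h(-n)$ is immediate, while $h'(-n)$ is obtained by logarithmic differentiation: the two Gamma factors contribute $-\tfrac1{\alpha_1}\psi(\tfrac{k}{\alpha_1})-\tfrac1{\alpha_2}\psi(\tfrac{k}{\alpha_2})$, the factor $x^{-s}$ contributes $-\log x$, and the sines contribute the cotangent terms $-p_1\pi\cot(p_1k\pi)-q_1\pi\cot(q_1k\pi)$, all multiplied by $h(-n)$.

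Collecting terms and using $n+1=k$, the combination $2\psi(k)-\tfrac1{\alpha_1}\psi(\tfrac{k}{\alpha_1})-\tfrac1{\alpha_2}\psi(\tfrac{k}{\alpha_2})-\log x$ is precisely $\xi(k)$, so the part of the residue carrying $\sin(p_1k\pi)\sin(q_1k\pi)$ yields the first sum of (\ref{eqn:st_prod11}) once the prefactor is rewritten via the identity $\tfrac{\Gamma(k/\alpha_1)\Gamma(k/\alpha_2)}{\alpha_1\alpha_2((k-1)!)^2}=\tfrac{\Gamma(k/\alpha_1+1)\Gamma(k/\alpha_2+1)}{(k!)^2}$. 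For the cotangent part I would use $\sin(p_1k\pi)\cot(p_1k\pi)=\cos(p_1k\pi)$, reducing it to $p_1\cos(p_1k\pi)\sin(q_1k\pi)+q_1\sin(p_1k\pi)\cos(q_1k\pi)$, and then the product-to-sum identity shows this equals $\tfrac{p_1+q_1}{2}\sin([p_1+q_1]k\pi)-\tfrac{p_1-q_1}{2}\sin([p_1-q_1]k\pi)$, which produces exactly the second and third sums of (\ref{eqn:st_prod11}). I would also note that when $\sin(p_1k\pi)$ or $\sin(q_1k\pi)$ vanishes the pole at $s=-n$ degenerates in order, but the derivative formula above remains valid and the surviving term is still captured by the cosine contribution, so no case distinction is needed.

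The main obstacle is the rigorous justification of the contour closure. One must bound the modulus of the integrand on the left semicircular arcs of growing radius, combining Stirling's asymptotics for $[\Gamma(s)]^2\Gamma(\tfrac{1-s}{\alpha_1})\Gamma(\tfrac{1-s}{\alpha_2})$ with the exponential growth $e^{(p_1+q_1)\pi|\Im s|}$ of the sine factors; here the admissible bounds $p_1\le 1/\alpha_1$, $q_1\le 1/\alpha_2$ are needed to keep the vertical integrals convergent. The condition $\tfrac1{\alpha_1}+\tfrac1{\alpha_2}<2$ enters decisively: since $\Gamma(\tfrac{1-s}{\alpha_i})$ grows like a factorial of order $|s|/\alpha_i$ while $[\Gamma(s)]^2$ decays like a factorial of order $2|s|$, the net size of the residue coefficients is $\exp\{k\log k\,(\tfrac1{\alpha_1}+\tfrac1{\alpha_2}-2)+o(k\log k)\}$, which is summable with infinite radius of convergence in $x$ precisely when the exponent is negative. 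The same estimate dominates the arc integral, so both the vanishing of the arc and the convergence of the residue series follow from the single hypothesis on $\alpha_1,\alpha_2$.
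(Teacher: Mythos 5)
Your proposal is correct and follows essentially the same route as the paper: close the contour $\Re s = a$ to the left, sum the residues of the double poles of $[\Gamma(s)]^2$ at $s=-n$, and kill the semicircular arc via Stirling estimates, with the hypothesis $\alpha_1+\alpha_2<2\alpha_1\alpha_2$ (i.e.\ $\tfrac{1}{\alpha_1}+\tfrac{1}{\alpha_2}<2$) doing exactly the work you describe both for the arc and for the convergence of the residue series. The only difference is organizational --- you compute the double-pole residue by logarithmic differentiation of the analytic part rather than by multiplying Taylor expansions factor by factor as the paper does, and you add a sensible remark about degenerate pole order when $\sin(p_1k\pi)$ or $\sin(q_1k\pi)$ vanishes --- but the substance coincides.
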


\begin{proof}[Proof of Lemma \ref{thm:lem1}:]
We use Residue Theorem for the evaluation of the integral 
(\ref{eqn:st_mb1}). 
Consider the closed loop $\gamma$ defined as $\gamma=\gamma_1+\gamma_2$, 
where $\gamma_1(t)=a+t, t\in[-ir,ir]$ and 
$\gamma_2(t)=a+re^{it}, t\in[\pi/2,3\pi/2]$, $r>0$. The loop $\gamma$ is formed
by half-circle on the left-side of integration path of (\ref{eqn:st_mb1}) and
matches the part of line of integration path. We denote the integrand in 
(\ref{eqn:st_mb1}) as 
\begin{align}
\label{eqn:h1}
h(s)\equiv\left[\Gamma(s)\right]^2
\Gamma\left(\tfrac{1-s}{\alpha_1}\right)
\Gamma\left(\tfrac{1-s}{\alpha_2}\right)
\sin(p_1[1-s]\pi)\sin(q_1[1-s]\pi)x^{-s},
\end{align}
and integrate $h(s)$ along the loop $\gamma$.
On the left-hand side of integration path in (\ref{eqn:st_mb1}) 
function $h(s)$ has poles of second order at all non-positive integer values. 
To evaluate the residue of $h(s)$ at $s=-k$, we rewrite $(s+k)^2 h(s)$ as:
\begin{align*}
h_1(s)=\frac{[\Gamma(s+k+1)]^2}{(s+k-1)^2\ldots s^2}
\Gamma\left(\tfrac{1-s}{\alpha_1}\right)
\Gamma\left(\tfrac{1-s}{\alpha_2}\right)
\sin(p_1[1-s]\pi)\sin(q_1[1-s]\pi)x^{-s},
\end{align*}
and we expand each term in $h_1(s)$ on the neighbourhood of point $s=-k$ 
as follows:
\begin{align*}
(s+n)^{-2}=\tfrac{1}{(k-n)^2}+\tfrac{2}{(k-n)^3}(s+k)+\ldots,\quad
\text{for }0\leq n\leq k-1,
\end{align*}
and:
\begin{align*}
&[\Gamma(s+k+1)]^2=1+2\Gamma'(1)(s+k)+\ldots\\
&\Gamma\left(\tfrac{1-s}{\alpha_1}\right)=\Gamma\left(\tfrac{1+k}{\alpha_1}\right)-
\tfrac{1}{\alpha_1}\Gamma'\left(\tfrac{1+k}{\alpha_1}\right)(s+k)+\ldots,\\
&x^{-s}=x^k\left\{1-(s+k)\log x+\ldots\right\},\\
&\sin(p_1[1-s]\pi)=\sin(p_1[1+k]\pi)-p_1\pi\cos(p_1[1+k]\pi)(s+k)+\ldots.
\end{align*}
Here we have used the Binomial Theorem, see e.g. 3.6.9 \cite{abramowitz}, 
and Taylor expansion, see e.g. 3.6.4 \cite{abramowitz}. 
In order to express the expansion for $h_1(s)$, 
we multiply the above series together,
and look for the coefficient associated with the first order term.
It is given by:
\begin{align}
\label{eqn:res1}
\frac{x^k}{(k!)^2}&\Gamma\left(\tfrac{k+1}{\alpha_1}\right)
\Gamma\left(\tfrac{k+1}{\alpha_2}\right)
\sin(p_1[1+k]\pi)\sin(q_1[1+k]\pi)
\Big\{2\Gamma'(1)+\\
\nonumber
&+\sum_{n=1}^{k}\tfrac{2}{n}-
\tfrac{1}{\alpha_1}\psi\left(\tfrac{k+1}{\alpha_1}\right)-
\tfrac{1}{\alpha_2}\psi\left(\tfrac{k+1}{\alpha_2}\right)-\log x\Big\}\\
\nonumber
&-\pi\frac{x^k}{(k!)^2}\Gamma\left(\tfrac{k+1}{\alpha_1}\right)
\Gamma\left(\tfrac{k+1}{\alpha_2}\right)
\Big\{p_1\cos(p_1[1+k]\pi)\sin(q_1[1+k]\pi)\\
\nonumber
&+q_1\sin(p_1[1+k]\pi)\cos(q_1[1+k]\pi)\Big\},
\end{align}
where $\psi(s)=\Gamma'(s)/\Gamma(s)$ is digamma function; 
see e.g. \cite{bateman1}.
Using relation 6.3.2 in \cite{abramowitz} for the digamma function $\psi(s)$, 
and 4.3.33 in \cite{abramowitz} for circular functions,
we obtain the value of the residue of $h(s)$ at $s=-k$. That is:
\begin{align*}
\res_{s=-k}h(s)&=
\frac{x^k}{(k!)^2}\Gamma\left(\tfrac{k+1}{\alpha_1}\right)
\Gamma\left(\tfrac{k+1}{\alpha_2}\right)\sin(p_1[1+k]\pi)\sin(q_1[1+k]\pi)
\Big\{2\psi(k+1)-\\
&-\tfrac{1}{\alpha_1}\psi\left(\tfrac{k+1}{\alpha_1}\right)-
\tfrac{1}{\alpha_2}\psi\left(\tfrac{k+1}{\alpha_2}\right)-\log x\Big\}-\\
&-\tfrac{\pi}{2}\frac{x^k}{(k!)^2}\Gamma\left(\tfrac{k+1}{\alpha_1}\right)
\Gamma\left(\tfrac{k+1}{\alpha_2}\right)
\Big\{(p_1+q_1)\sin\left[(p_1+q_1)(1+k)\pi\right]+\\
&-(p_1-q_1)\sin\left[(p_1-q_1)(1+k)\pi\right]\Big\}.
\end{align*}

Let $m$ denote the biggest integer smaller then $r$. The orientation of 
loop $\gamma$ is anti-clockwise, so the index function has value $1$. 
The Residue Theorem gives:
\begin{align*}
\int_{a-ir}^{a+ir}h(t)\mathrm{d}t=2\pi i\sum_{k=0}^m\res_{s=-k}h(s)
-\int_{\gamma_2}h(s)\mathrm{d}s.
\end{align*}
Let us consider the convergence of integral:
\begin{align}
\label{eqn:int_arc}
\int_{\gamma_2}h(s)\mathrm{d}s=\int_{\pi/2}^{3\pi/2}h\left(re^{it}\right)ire^{it}\mathrm{d}t.
\end{align}
We use the asymptotic formula for the gamma function 6.1.39 in \cite{abramowitz},
and the Stirling formula for $s=u+iv=re^{it}$
to estimate the upper bound
of factors of $h(s)$. 
This yields: $|\Gamma(s)|\leq\sqrt{2\pi}r^{u-1/2}e^{u-vt}$.
Regarding the integral (\ref{eqn:int_arc}), 
the real part of $s$ is negative, and so we estimate:
\begin{align*}
&|\Gamma(s)|^2\leq2\pi r^{2u-1}e^{-2u-2vt}\leq c_1 r^{-2r-1},\\
&\left|\Gamma\left(\tfrac{1-u}{\alpha_1}-i\tfrac{v}{\alpha_1}\right)\right|\leq
\sqrt{2\pi}r^{-u/\alpha_1-(1/2-1/\alpha_1)}e^{u/\alpha_1+vt/\alpha_1}
e^{-1/\alpha_1}\leq c_2 r^{r/\alpha_1},\\
&|\sin(p_1[1-s]\pi)|\leq c_3 \cosh(-p_1\pi vt),\\
&|x^{-s}|\leq c_4 x^r,
\end{align*}
where $c_i,i=1,2,3,4$, are some real constants. So, we estimate:
\begin{align*}
\left|\int_{\gamma_2}h(s)\mathrm{d}s\right| \leq\pi r \max_{s\in\gamma_2}|h(s)|
\leq c r^{-r(2-1/\alpha_1-1/\alpha_2)},
\end{align*}
where again $c$ is some real constant. 
In order for the integral (\ref{eqn:int_arc}) to vanish identically
as $r$ goes to infinity, we require:
$\alpha_1+\alpha_2<2\alpha_1\alpha_2.$
Therefore, for any admissible $\alpha_1,\alpha_2$, we apply Jordan's Lemma 
on integral (\ref{eqn:int_arc}). 
Thus we have:
\begin{align*}
\tfrac{1}{2\pi i}\int_{a-i\infty}^{a+i\infty}h(s)\mathrm{d}s=\sum_{k=0}^{\infty}
\res_{s=-k}h(s).
\end{align*}
After substituting back into (\ref{eqn:st_mb1}) and rearranging the terms 
in the series we obtain (\ref{eqn:st_prod11}).
\end{proof}

\begin{proof}[Proof of Proposition \ref{thm:prop1}:]
We first discuss the condition on the stability parameters 
$\alpha_1,\alpha_2$. To apply results of previous Lemma, the parameters 
must satisfy $\alpha_1+\alpha_2<2\alpha_1\alpha_2$.
This inequality is satisfied for $1<\alpha_1,\alpha_2\leq 2$
or for pairs $\alpha_1,\alpha_2$ satisfying condition: 
$\alpha_1>2/3$ and $\tfrac{\alpha_1}{2\alpha_1-1}<\alpha_2$.

From Lemma \ref{thm:lem1} and (\ref{eqn:G1}), 
we compute the series: 
\begin{subequations}
\begin{align}
\label{eqn:g11}
g_1(x)=\tilde{g}(x;\alpha_1,\alpha_2,p_1,q_1)+
\tilde{g}(x;\alpha_1,\alpha_2,p_2,q_2),
\end{align}
where $\tilde{g}(x)$ is defined in (\ref{eqn:st_mb}).
From $p_1+p_2=1$ and addition formulas for circular functions follows:
\begin{align*}
\sin(p_2 k\pi)&=(-1)^{k+1}\sin(p_1 k\pi),\\
\sin([p_2+q_2] k\pi)&=-\sin([p_1+q_1]k\pi),\\
\sin([p_2-q_2] k\pi)&=-\sin([p_1-q_1]k\pi).
\end{align*}
Therefore from (\ref{eqn:st_prod11}) we obtain:
\begin{align}
\label{eqn:tg1}
\tilde{g}(x;\alpha_1,\alpha_2,p_2,q_2)&=
\tilde{g}(x;\alpha_1,\alpha_2,p_1,q_1)+\\
\nonumber
&+\tfrac{1}{\pi}\sum_{k=1}^{\infty}
\Gamma\left(\tfrac{k}{\alpha_1}+1\right)
\Gamma\left(\tfrac{k}{\alpha_2}+1\right)\sin([p_1+q_1]k\pi)
\frac{x^{k-1}}{(k!)^2},
\end{align}
\end{subequations}
and we proved the relation (\ref{eqn:st_prod1}).

\begin{subequations}
Similarly, we have:
\begin{align*}
g_2(x)=\tilde{g}(x;\alpha_1,\alpha_2,p_1,q_2)+
\tilde{g}(x;\alpha_1,\alpha_2,p_2,q_1),
\end{align*}
and we have relations:
\begin{align*}
\sin([p_1+q_2] k\pi)&=-\sin([p_2+q_1]k\pi)=(-1)^k\sin([p_1-q_1]k\pi),\\
\sin([p_1-q_2] k\pi)&=-\sin([p_2-q_1]k\pi)=(-1)^k\sin([p_1+q_1]k\pi),
\end{align*}
and so:
\begin{align*}
\tilde{g}(x;\alpha_1,\alpha_2,&p_1,q_2)=
\tilde{g}(-x;\alpha_1,\alpha_2,p_1,q_1)+\\
&+\tfrac{1}{2\pi}\sum_{k=1}^{\infty}
\Gamma\left(\tfrac{k}{\alpha_1}+1\right)
\Gamma\left(\tfrac{k}{\alpha_2}+1\right)\sin([p_1+q_1]k\pi)
\frac{(-x)^{k-1}}{(k!)^2}+\\
&+\tfrac{1}{2\pi}\sum_{k=1}^{\infty}
\Gamma\left(\tfrac{k}{\alpha_1}+1\right)
\Gamma\left(\tfrac{k}{\alpha_2}+1\right)\sin([p_1-q_1]k\pi)
\frac{(-x)^{k-1}}{(k!)^2},
\end{align*}
and
\begin{align*}
\tilde{g}(x;\alpha_1,\alpha_2,&p_2,q_1)=
\tilde{g}(-x;\alpha_1,\alpha_2,p_1,q_1)+\\
&+\tfrac{1}{2\pi}\sum_{k=1}^{\infty}
\Gamma\left(\tfrac{k}{\alpha_1}+1\right)
\Gamma\left(\tfrac{k}{\alpha_2}+1\right)\sin([p_1+q_1]k\pi)
\frac{(-x)^{k-1}}{(k!)^2}-\\
&-\tfrac{1}{2\pi}\sum_{k=1}^{\infty}
\Gamma\left(\tfrac{k}{\alpha_1}+1\right)
\Gamma\left(\tfrac{k}{\alpha_2}+1\right)\sin([p_1-q_1]k\pi)
\frac{(-x)^{k-1}}{(k!)^2}.
\end{align*}
\end{subequations}
Thus, the same power series gives us both $g_1$ and $g_2$, as 
$g_2(x)=g_1(-x)$. This should have been expected since
the power series for $g_1(x)$ converges uniformly
in every bounded region of the complex x plane,
therefore defines an entire function.
\end{proof}

\begin{proof}[Proof of Proposition \ref{thm:prop2}:]	
Let us assume that $x>0$. First we consider $\tilde{g}(x;\alpha,p_1)$.
When $p_1\neq1/2$, we rewrite (\ref{eqn:res1}) to obtain residue:
\begin{align*}
\res_{s=-k}h(s)=
\frac{x^k}{(k!)^2}\left[\Gamma\left(\tfrac{k+1}{\alpha}\right)\right]^2
&[\sin(p_1[1+k]\pi)]^2\Big\{2\psi(k+1)-\\
&-\tfrac{2}{\alpha}\psi\left(\tfrac{k+1}{\alpha}\right)-
2p_1\pi\cot(p_1[1+k]\pi)-\log x\Big\},
\end{align*}
for $\tilde{g}(x;\alpha,p_2)$ and $p_2\neq 1/2$, we have:
\begin{align*}
\res_{s=-k}h(s)=
\frac{x^k}{(k!)^2}\left[\Gamma\left(\tfrac{k+1}{\alpha}\right)\right]^2
&[\sin(p_1[1+k]\pi)]^2\Big\{2\psi(k+1)-\\
&-\tfrac{2}{\alpha}\psi\left(\tfrac{k+1}{\alpha}\right)+
2(1-p_1)\pi\cot(p_1[1+k]\pi)-\log x\Big\},
\end{align*}
and so for the term in $g(x;\alpha,p_1)$ we get:
\begin{align*}
2\frac{x^k}{(k!)^2}&\left[\Gamma\left(\tfrac{k+1}{\alpha}\right)\right]^2
[\sin(p_1[1+k]\pi)]^2\Big\{\psi(k+1)-\\
&-\tfrac{1}{\alpha}\psi\left(\tfrac{k+1}{\alpha}\right)
-(p_1-p_2)\pi\cot(p_1[1+k]\pi)-\tfrac{1}{2}\log x\Big\}.
\end{align*}
Using reflection formula for digamma function 6.3.7 \cite{abramowitz}, 
we obtain for the term in $g(s;\alpha,p_1)$ the following expression:
\begin{align*}
2\frac{x^k}{(k!)^2}&\left[\Gamma\left(\tfrac{k+1}{\alpha}\right)\right]^2
[\sin(p_1[1+k]\pi)]^2\Big\{\psi(k+1)-\tfrac{1}{\alpha}
\psi\left(\tfrac{k+1}{\alpha}\right)-\\
&-(p_1-p_2)\psi(1-p_1[1+k])+(p_1-p_2)\psi(p_1[1+k])-
\tfrac{1}{2}\log x\Big\}.
\end{align*}
Thus, we have shown (\ref{eqn:st_prod2a}).
For $p_1=1/2$, the sum (\ref{eqn:st_prod2b}) follows directly from 
(\ref{eqn:st_prod1}).
\end{proof}

\begin{lemma}
\label{thm:lem2}
For $x>0$ the function $\tilde{g}(x)$ defined by Mellin-Barnes integral 
in (\ref{eqn:st_mb1}), 
with parameters satisfying $\alpha_1+\alpha_2>2\alpha_1\alpha_2$
and $p_1,q_1\neq 0$,
has the power series representation as follows:
\begin{subequations}
\begin{align}
\label{eqn:st_prod21}
\tilde{g}(x)&=
\tfrac{1}{\alpha_1\pi^2}\sum_{k=1}^{\infty}
\tfrac{(-1)^{k+1}}{(k-1)!}
\left[\Gamma(\alpha_1 k+1)\right]^2
\Gamma\left(1-\tfrac{\alpha_1}{\alpha_2}k\right)
\sin(\alpha_1 p_1 k\pi)\sin(\alpha_1 q_1 k \pi) x^{-\alpha_1 k-1}+\\
\nonumber
&+\tfrac{1}{\alpha_2\pi^2}\sum_{k=1}^{\infty}
\tfrac{(-1)^{k+1}}{(k-1)!}
\left[\Gamma(\alpha_2 k+1)\right]^2
\Gamma\left(1-\tfrac{\alpha_2}{\alpha_1}k\right)
\sin(\alpha_2 p_1 k\pi)\sin(\alpha_2 q_1 k \pi) x^{-\alpha_2 k-1},
\end{align}
for $\alpha_1\neq\alpha_2$. When $\alpha_1=\alpha_2=\alpha$
the power series is given by:
\begin{align}
\label{eqn:st_prod22}
\tilde{g}(x)&=\tfrac{1}{\pi^2}\sum_{k=0}^{\infty}
\left[\Gamma\left(\alpha k+1\right)\right]^2
\sin(\alpha p_1 k\pi)\sin(\alpha q_1 k \pi)\zeta(k)
\frac{x^{-\alpha k-1}}{(k!)^2}-\\
\nonumber
&-\tfrac{p_1+q_1}{2\pi}\sum_{k=0}^{\infty}
\left[\Gamma\left(\alpha k+1\right)\right]^2
\sin(\alpha [p_1+q_1]k \pi)\frac{x^{-\alpha k-1}}{(k!)^2}+\\
\nonumber
&+\tfrac{p_1-q_1}{2\pi}\sum_{k=0}^{\infty}
\left[\Gamma\left(\alpha k+1\right)\right]^2
\sin(\alpha[p_1-q_1]k \pi)\frac{x^{-\alpha k-1}}{(k!)^2}.
\end{align}
and we define:
\begin{align}
\zeta(k)\equiv 
\tfrac{2}{\alpha}\psi(k+1)-2\psi\left(\alpha k+1\right)+\log x.
\end{align}
\end{subequations}
\end{lemma}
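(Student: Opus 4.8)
The plan is to evaluate the same Mellin--Barnes integral (\ref{eqn:st_mb1}) by the Residue Theorem, but now closing the contour to the \emph{right}. The reason is exactly the reversed hypothesis: in the estimate carried out in the proof of Lemma \ref{thm:lem1} the left semicircular arc was bounded by $c\,r^{-r(2-1/\alpha_1-1/\alpha_2)}$, which vanishes as $r\to\infty$ precisely when $\alpha_1+\alpha_2<2\alpha_1\alpha_2$. Under the opposite condition $\alpha_1+\alpha_2>2\alpha_1\alpha_2$ the left arc no longer decays, so I would instead take the half-circle $\gamma_2'(t)=a+re^{it}$, $t\in[-\pi/2,\pi/2]$, lying to the right of the integration path. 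Running the Stirling estimate \cite{abramowitz} (6.1.39) on $\gamma_2'$ with $\Re s>0$ reverses the sign of the controlling exponent and yields a bound of order $r^{-r(1/\alpha_1+1/\alpha_2-2)}$, which tends to $0$ under the stated hypothesis. Jordan's Lemma then gives $\tilde g(x)=-\tfrac{1}{\alpha_1\alpha_2\pi^2}\sum_{\text{right}}\res h(s)$, the minus sign coming from the clockwise orientation of the closed loop.

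Next I would locate the poles of $h(s)$ in (\ref{eqn:h1}) lying to the right of $\Re s=a$. The factor $[\Gamma(s)]^2$ is analytic there, so the only contributions come from $\Gamma(\tfrac{1-s}{\alpha_1})$ and $\Gamma(\tfrac{1-s}{\alpha_2})$, whose poles sit at $s=1+\alpha_1 k$ and $s=1+\alpha_2 k$, $k\ge 0$. I would first note that the candidate pole at $s=1$ is removable: there $\Gamma(\tfrac{1-s}{\alpha_1})\Gamma(\tfrac{1-s}{\alpha_2})$ has a double pole, but $\sin(p_1[1-s]\pi)\sin(q_1[1-s]\pi)$ has a matching double zero, so the sums start at $k=1$. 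For $\alpha_1\neq\alpha_2$, away from the resonances $\alpha_1 k=\alpha_2 m$ where $\Gamma(-\tfrac{\alpha_1}{\alpha_2}k)$ would itself blow up, all remaining poles are simple.

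For the simple pole at $s=1+\alpha_1 k$ I would compute $\res\Gamma(\tfrac{1-s}{\alpha_1})$ by the substitution $w=\tfrac{1-s}{\alpha_1}$, using $\res_{w=-k}\Gamma(w)=(-1)^k/k!$ and the Jacobian $-\alpha_1$, which gives $(-1)^{k+1}\alpha_1/k!$; the remaining factors are evaluated at $s=1+\alpha_1 k$, where $1-s=-\alpha_1 k$ turns the two sines into $\sin(\alpha_1 p_1 k\pi)\sin(\alpha_1 q_1 k\pi)$ and produces $[\Gamma(\alpha_1 k+1)]^2\Gamma(-\tfrac{\alpha_1}{\alpha_2}k)x^{-\alpha_1 k-1}$. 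Applying the recurrence $\Gamma(1+z)=z\Gamma(z)$ (\cite{abramowitz} 6.1.15) to rewrite $\Gamma(-\tfrac{\alpha_1}{\alpha_2}k)$ through $\Gamma(1-\tfrac{\alpha_1}{\alpha_2}k)$, summing over $k$, and adjoining the symmetric family at $s=1+\alpha_2 k$ yields (\ref{eqn:st_prod21}).

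For $\alpha_1=\alpha_2=\alpha$ every right pole becomes a \emph{double} pole, now generated by $[\Gamma(\tfrac{1-s}{\alpha})]^2$. Here I would reuse the bookkeeping of Lemma \ref{thm:lem1}: write $(s-s_0)^2 h(s)$, Laurent-expand the squared gamma together with the Taylor expansions of $[\Gamma(s)]^2$, of $x^{-s}$, and of the two sine factors about $s_0=1+\alpha k$, and read off the coefficient of $(s-s_0)^{-1}$, exactly as in (\ref{eqn:res1}). The derivative of the squared gamma supplies the $\tfrac{2}{\alpha}\psi(k+1)$ term, the evaluated $[\Gamma(s)]^2$ the $-2\psi(\alpha k+1)$ term, and $x^{-s}$ the logarithmic term; after accounting for the overall clockwise orientation these combine into $\zeta(k)$, while the first-order cross terms of the two sines, recombined by the product-to-sum identity (\cite{abramowitz} 4.3.33), produce the $\sin(\alpha[p_1+q_1]k\pi)$ and $\sin(\alpha[p_1-q_1]k\pi)$ sums of (\ref{eqn:st_prod22}). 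I expect the double-pole case to be the main obstacle, since it requires expanding the product of five factors to first order and correctly collecting the digamma and logarithm contributions; the reversed-arc estimate of the first paragraph is the other delicate point, as one must also record for which range of $x$ (all $x>0$ when $0<\alpha_1,\alpha_2<1$, versus only asymptotically as $x\to\infty$ in the mixed regime) the resulting series represents $\tilde g$.
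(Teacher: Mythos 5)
Your proposal follows essentially the same route as the paper's own proof: close the contour with the right half-circle $\gamma_2(t)=a+re^{it}$, $t\in[-\pi/2,\pi/2]$, use the Stirling estimate to show the arc integral vanishes exactly when $\alpha_1+\alpha_2>2\alpha_1\alpha_2$, pick up the minus sign from the clockwise orientation, evaluate the simple poles at $s=1+\alpha_1k$ and $s=1+\alpha_2k$ via the residue of $\Gamma$ and the recurrence $\Gamma(1+z)=z\Gamma(z)$ when $\alpha_1\neq\alpha_2$, and handle the double poles at $s=1+\alpha k$ by the same Laurent-expansion bookkeeping as in Lemma \ref{thm:lem1} when $\alpha_1=\alpha_2$, collecting the digamma and logarithm contributions into $\zeta(k)$ and the sine cross-terms via the product-to-sum identity. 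Your two additional observations --- that the candidate pole at $s=1$ is removable because the double zero of the sine factors cancels the double pole of the gammas, and that coincidences $\alpha_1 k=\alpha_2 m$ would create higher-order poles and invalidate the simple-pole formula --- are refinements the paper passes over silently, and they strengthen rather than alter the argument.
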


\begin{proof}[Proof of Lemma \ref{thm:lem2}:]
We proceed as in Proof of Lemma 4.1, only this time 
we consider the residues which lie on the right-side of integration path
and use Residue Theorem to evaluate the integral (\ref{eqn:st_mb1}).
So, take the closed loop $\gamma$ defined as $\gamma=\gamma_1-\gamma_2$, 
where $\gamma_1(t)=a+t, t\in[-ir,ir]$ and 
$\gamma_2(t)=a+re^{it}, t\in[-\pi/2,\pi/2]$, $r>0$. The loop $\gamma$ is formed
by the half-circle on the right-side of integration path of (\ref{eqn:st_mb1}) and
coincides with the part of integration path. 
We denote the integrand in (\ref{eqn:st_mb1}) as $h(s)$
and integrate it along the loop $\gamma$.
Whenever $\alpha_1\neq\alpha_2$,
function $h(s)$ has only simple poles on the right-side 
of integration path given in (\ref{eqn:st_mb1}). These are located 
at all points $\alpha_1 k+1$ and $\alpha_2 k+1$, where $k$ is a positive 
integer. When $\alpha_1=\alpha_2=\alpha$, 
the function $h(s)$ has second order poles at all points $\alpha k+1$,
and $k$ being a positive integer.

The evaluation of the simple poles is straightforward, and yields:
\begin{align*}
\res_{s=\alpha_1 k+1}h(s)=\alpha_1\tfrac{(-1)^{k+1}}{k!}
\left[\Gamma(1+\alpha_1 k)\right]^2 
\Gamma\left(-\tfrac{\alpha_1}{\alpha_2}k\right)
\sin(\alpha_1 p_1 k\pi)\sin(\alpha_1 q_1 k \pi)x^{-\alpha_1 k-1}.
\end{align*}
Using the functional equation for Gamma function 6.1.17 \cite{abramowitz}, 
we have:
\begin{align*}
\res_{s=\alpha_1 k+1}h(s)=\alpha_2
\tfrac{(-1)^{k}}{(k-1)!}
\left[\Gamma(\alpha_1 k+1)\right]^2
\Gamma\left(1-\tfrac{\alpha_1}{\alpha_2} k\right)
\sin(\alpha_1 p_1 k\pi)\sin(\alpha_1 q_1 k \pi)
x^{-\alpha_1 k-1}.
\end{align*}
The residues of the simple poles at $\alpha_2 k+1$ with $k$ being 
positive integer are computed similarly. 

For $\alpha_1=\alpha_2=\alpha$, 
we rewrite $h(s)$ in (\ref{eqn:h1}) as:
\begin{align*}
h(s)=\frac{\left[\Gamma\left(\tfrac{1-s}{\alpha}+k\right)\right]^2}
{\left(\tfrac{1-s}{\alpha}+k-1\right)^2\ldots\left(\tfrac{1-s}{\alpha}\right)^2}
\left[\Gamma(s)\right]^2\sin(p_1[1-s]\pi)\sin(q_1[1-s]\pi)x^{-s},
\end{align*}
and expand the factors in $(s-\alpha k-1)^2 h(s)$ around $\alpha k+1$:
\begin{align*}
&\left(\tfrac{1-s}{\alpha}+n\right)^{-2}=
\tfrac{1}{(k-n)^2}-\tfrac{2}{\alpha (k-n)^3}(s-\alpha k-1)+
\ldots,\quad\text{ for }0\leq n\leq k-1,
\end{align*}
and:
\begin{align*}
&\left[\Gamma\left(\tfrac{1-s}{\alpha}+k+1\right)\right]^2=
1-\tfrac{2}{\alpha}\Gamma'(1)(s-\alpha k-1)+\ldots\\
&\left[\Gamma\left(s\right)\right]^2=\left[\Gamma\left(\alpha k+1\right)\right]^2
\left\{1+2\psi\left(\alpha k+1\right)(s-\alpha k-1)+\ldots\right\},\\
&x^{-s}=x^{-\alpha k-1}\left\{1-(s-\alpha k-1)\log x+\ldots\right\},\\
&\sin(p_1[1-s]\pi)=-\sin(\alpha p_1 k\pi)-p_1\pi\cos(\alpha p_1 k\pi)(s-\alpha k-1)
+\ldots.
\end{align*}
The residue of the second order pole of $h(s)$ is now given by 
the coefficient associated with the first order term of the 
expansion of $h(s)$, and so:
\begin{align*}
\res_{s=\alpha k+1}h(s)&=
-\alpha^2\frac{x^{-\alpha k-1}}{(k!)^2}
\left[\Gamma\left(\alpha k+1\right)\right]^2
\sin(\alpha p_1 k\pi)\sin(\alpha q_1 k \pi)
\Big\{\tfrac{2}{\alpha}\psi(k+1)-\\
&-2\psi\left(\alpha k+1\right)+\log x\Big\}+\\
&+\tfrac{\alpha^2\pi}{2}\frac{x^{-\alpha k-1}}{(k!)^2}
\left[\Gamma\left(\alpha k+1\right)\right]^2
\Big\{(p_1+q_1)\sin(\alpha [p_1+q_1] k \pi)+\\
&-(p_1-q_1)\sin(\alpha [p_1-q_1] k\pi)\Big\} .
\end{align*}
Let $m$ denotes the biggest integer smaller then $r$. The orientation of 
loop $\gamma$ is clockwise, so the index function has value $-1$. 
The Residue Theorem gives:
\begin{align*}
\int_{a-ir}^{a+ir}h(t)\mathrm{d}t=\int_{\gamma_2}h(s)\mathrm{d}s
-2\pi i\sum_{k=1}^m\res_{s=\alpha_1 k+1}h(s)
-2\pi i\sum_{k=1}^m\res_{s=\alpha_2 k+1}h(s),
\end{align*}
for $\alpha_1\neq\alpha_2$ and 
\begin{align*}
\int_{a-ir}^{a+ir}h(t)\mathrm{d}t=\int_{\gamma_2}h(s)\mathrm{d}s
-2\pi i\sum_{k=1}^m\res_{s=\alpha k+1}h(s),
\end{align*}
when $\alpha_1=\alpha_2=\alpha$.
To verify the Jordan's Lemma for integral along arc $\gamma_2$,
we use the same estimates as in Proof of Lemma \ref{thm:lem1}, 
only this time the real part of $s$ is positive. 
So, denoting $s=u+iv=re^{it}$, we estimate $u\leq r$ and we have:
\begin{align*}
\left|\int_{\gamma_2}h(s)\mathrm{d}s\right| \leq\pi r \max_{s\in\gamma_2}|h(s)|
\leq c r^{r(2-1/\alpha_1-1/\alpha_2)},
\end{align*}
where $c$ is some real constant. 
The integral along arc $\gamma_2$ vanishes identically
as the modulus $r$ goes to infinity, 
whenever $\alpha_1+\alpha_2>2\alpha_1\alpha_2$.
This inequality is obviously satisfied for $0<\alpha_1,\alpha_2<1$, 
or for pairs $\alpha_1,\alpha_2$ satisfying condition: 
$\alpha_1>2/3$ and $\tfrac{\alpha_1}{2\alpha_1-1}>\alpha_2$.
In particular, when $\alpha_1=\alpha_2=\alpha$, we have $0<\alpha<1$.
Therefore, for any admissible $\alpha_1,\alpha_2$, we apply Jordan's Lemma 
on the integral of $h(s)$ integrated along the arc $\gamma_2$. 
So after substituting back into 
(\ref{eqn:st_mb1}) we obtain (\ref{eqn:st_prod21}),
(\ref{eqn:st_prod22}), respectively.
\end{proof}

\begin{proof}[Proof of Proposition \ref{thm:prop3}:]
Let us start with the series (\ref{eqn:st_prod3a}), i.e. we assume that 
$\alpha_1\neq\alpha_2$. We need to compute (\ref{eqn:g11}).
In order to prove the relation, we use formulas for circular functions 
4.3.31, 4.3.32 in \cite{abramowitz} and obtain:
\begin{align*}
\sin(\alpha_1p_1k\pi)&\sin(\alpha_1q_1k\pi)+
\sin(\alpha_1p_2k\pi)\sin(\alpha_1q_2k\pi)=\\
&=\cos(\alpha_1[p_1-q_1]k\pi)-\tfrac{1}{2}\cos(\alpha_1[p_1+q_1]k\pi)
-\tfrac{1}{2}\cos(\alpha_1[p_2+q_2]k\pi)=\\
&=\cos(\alpha_1[p_1-q_1]k\pi)-
\cos(\alpha_1k\pi)\cos(\alpha_1[p_1-q_2]k\pi),
\end{align*}
and from Lemma \ref{thm:lem2} follows (\ref{eqn:st_prod3a}).

Relation (\ref{eqn:st_prod3b}) is established the same way as 
(\ref{eqn:st_prod1})  in Proposition \ref{thm:prop1}.
\end{proof}

\section{Discussion.}
We investigated the product of two independent random variables which 
are distributed according the strictly stable law. 
In particular, when we consider independent random variables $X_1,\ldots,X_4$, 
with stable densities $f_1(x)=f_2(x)\equiv f(x;\alpha,p_1)$ 
and $f_3(x)=f_4(x)\equiv f(x;\alpha,p_2)$, respectively, 
as a consequence of reflection and scaling properties (\ref{eqn:stprod_scale}) -
(\ref{eqn:stprod_refl2}) we have:
\begin{subequations}
\begin{align}
X_1\cdot X_2\overset{d}{=}X_3\cdot X_4\overset{d}{=}-(X_1\cdot X_3).
\end{align}
The density of the product, say $X_1\cdot X_2$, 
is given by (\ref{eqn:st_prod2a}), (\ref{eqn:st_prod2b}), for $1<\alpha\leq2$,
and by (\ref{eqn:st_prod3c}) for $0<\alpha<1$, respectively. 
If $f_1(x)=f_2(x)\equiv f(x;\alpha,p_1,c)$ 
and $f_3(x)=f_4(x)\equiv f(x;\alpha,p_2,c)$ are densities of random variables
$X_1,\ldots,X_4$, then for its density (\ref{eqn:st_prod2a}) holds the scaling
property:
\begin{align}
g(x;\alpha,p_1,c)=
\tfrac{1}{c^{2/\alpha}}g(xc^{-2/\alpha};\alpha,p_1)=
\tfrac{1}{c^{2/\alpha}}g(xc^{-2/\alpha};\alpha,p_2).
\end{align}
\end{subequations}
Therefore, the scaling property of stable laws is preserved under the 
multiplicative operation. 
Further, if of the stable laws in the considered product is symmetric, 
then the resulting product density is also symmetric. 

The representations of densities given in Proposition \ref{thm:prop3}, 
(\ref{eqn:st_prod3a})-(\ref{eqn:st_prod3c}) are asymptotic expansions 
of $g(x;\alpha_1,\alpha_2,p_1,q_1)$ when $1<\alpha_1,\alpha_2<2$ 
and $x\gg1$. Similarly the series in Propositions \ref{thm:prop1}, 
\ref{thm:prop2} are asymptotic expansions for product density 
$g(x;\alpha_1,\alpha_2,p_1,q_1)$ when $0<\alpha<1$ and $x\ll 1$. 

In \cite{zolotarev_mellin} are discussed multiplicative laws for stable 
distributions for particular choices of parameters. We have not considered
this problem. We also have not given the value of the product density at origin.
The distribution of the ratio of independent stable random variables 
can be derived the same way, see \cite{epstein}.

The Lemmas \ref{thm:lem1}, \ref{thm:lem2}, suggest that we can describe 
the power series we derived for the product of stable law variables 
in terms of Fox's H-function.
The representation of stable densities by Fox's H-functions  
has been considered in \cite{schneider} for some specific choices 
of asymmetry parameters. Let us examine the particular choices 
of parameters for Fox's H-function in connection with the density 
of product of two independent stable laws.
Recalling (\ref{eqn:mellin1}), we represent function $\tilde{g}(x)$ 
in (\ref{eqn:st_mb}) as:
\begin{align*}
&\tilde{g}(x;\alpha_1,\alpha_2,p_1,q_1)=\\
&=\frac{1}{2\alpha_1\alpha_2\pi i}\int_{a-i\infty}^{a+i\infty}
\frac{[\Gamma(s)]^2\Gamma\left(1-\tfrac{\alpha_1-1}{\alpha_1}-\tfrac{s}{\alpha_1}
\right)\Gamma\left(1-\tfrac{\alpha_2-1}{\alpha_2}-\tfrac{s}{\alpha_2}\right)}
{\Gamma(1-p_2-p_1 s)\Gamma(p_2+p_1s)\Gamma(1-q_2-q_1 s)\Gamma(q_2+q_1s)}
x^{-s}\d s,
\end{align*}
for $x>0$. This can be written in terms of Fox's H-function
as:
\begin{align*}
\tilde{g}(x;\alpha_1,\alpha_2,p_1,q_1)=
\tfrac{1}{\alpha_1\alpha_2}H^{22}_{44}\left[x\Big|
\begin{array}{cccc}
(\tfrac{\alpha_1-1}{\alpha_1},\tfrac{1}{\alpha_1}),(\tfrac{\alpha_2-1}{\alpha_2},\tfrac{1}{\alpha_2}),(p_2,p_1),(q_2,q_1)\\
(0,1),(0,1),(p_2,p_1),(q_2,q_1)
\end{array}
\right]
\end{align*}
for $x>0$. 
From (\ref{eqn:tg1}) we have relations for sum:
\begin{align*}
&H^{22}_{44}\left[x\Big|
\begin{array}{cccc}
(\tfrac{\alpha_1-1}{\alpha_1},\tfrac{1}{\alpha_1}),(\tfrac{\alpha_2-1}{\alpha_2},
\tfrac{1}{\alpha_2}),(p_2,p_1),(q_2,q_1)\\
(0,1),(0,1),(p_2,p_1),(q_2,q_1)
\end{array}\right]+\\
&+H^{22}_{44}\left[x\Big|
\begin{array}{cccc}
(\tfrac{\alpha_1-1}{\alpha_1},\tfrac{1}{\alpha_1}),(\tfrac{\alpha_2-1}{\alpha_2},
\tfrac{1}{\alpha_2}),(p_1,p_2),(q_1,q_2)\\
(0,1),(0,1),(p_1,p_2),(q_1,q_2)
\end{array}\right]=\\
&=\alpha_1\alpha_2 g(x;\alpha_1,\alpha_2,p_1,q_1),
\end{align*}
where $g(x)$ is the density given in (\ref{eqn:st_prod1})-(\ref{eqn:st_prod3c}) 
and $x>0$.
For difference of these two particular H-functions, we have:
\begin{align*}
&H^{22}_{44}\left[x\Big|
\begin{array}{cccc}
((\tfrac{\alpha_1-1}{\alpha_1},\tfrac{1}{\alpha_1}),(\tfrac{\alpha_2-1}{\alpha_2},
\tfrac{1}{\alpha_2}),(p_2,p_1),(q_2,q_1)\\
(0,1),(0,1),(p_2,p_1),(q_2,q_1)
\end{array}\right]-\\
&-H^{22}_{44}\left[x\Big|
\begin{array}{cccc}
((\tfrac{\alpha_1-1}{\alpha_1},\tfrac{1}{\alpha_1}),(\tfrac{\alpha_2-1}{\alpha_2},
\tfrac{1}{\alpha_2}),(p_1,p_2),(q_1,q_2)\\
(0,1),(0,1),(p_1,p_2),(q_1,q_2)
\end{array}\right]=\\
&=\tfrac{1}{\pi}\sum_{k=1}^{\infty}
\Gamma\left(\tfrac{k}{\alpha_1}\right)
\Gamma\left(\tfrac{k}{\alpha_2}\right)
\sin([p_1+q_1]k\pi)\frac{x^{k-1}}{[(k-1)!]^2},
\end{align*}
and we assume $\alpha_1,\alpha_2 >1$. 

The other relations can be explored and established by combinations
of results in section three and four.

The product of $n$ independent stable random variables can be computed
by general method in \cite{thompson} combined with method suggested in Proofs 
in section 4. In that case, the integrand in 
(\ref{eqn:st_mb1}) will have poles of $n$th order. These 
can be computed by expanding each term of inegrand in (\ref{eqn:st_mb1})
and it will lead to occurance of polygamma functions in the power-series.
This result can be found useful for example in combination 
with Lagrange's inversion theorem in order to derive power-series 
for quantile function of stable laws.

\end{document}